\newcolumntype{C}[1]{>{\centering\arraybackslash}m{#1}}
\tikzstyle{none}=[inner sep=0pt]
\newcommand{\R}{\mathcal R}
\newcommand{\bZ}{\mathbb Z}
\renewcommand{\H}{\mathrm{H}}
\newcommand{\x}{\times}
\theoremstyle{plain}
\newtheorem*{thmintro}{Theorem}
\newtheorem{thm}{Theorem}[section]
\newtheorem{prop}[thm]{Proposition}
\newtheorem{lem}[thm]{Lemma}
\newtheorem{cor}[thm]{Corollary}
\theoremstyle{remark}
\newtheorem{rem}[thm]{Remark}
\theoremstyle{definition}
\newtheorem{defn}[thm]{Definition}
\newtheorem{conj}[thm]{Conjecture}
\newtheorem*{crit*}{Criterion~B}
\definecolor{aquamarine}{rgb}{0.5, 1.0, 0.83}
\definecolor{princetonorange}{rgb}{1.0, 0.56, 0.0}
\definecolor{caribbeangreen}{rgb}{0.0, 0.8, 0.6}
\definecolor{bunired}{rgb}{0.8, 0.0, 0.0}
\definecolor{cdgreen}{rgb}{0.0, 0.42, 0.24}
\definecolor{lavender(floral)}{rgb}{0.71, 0.49, 0.86}
\definecolor{bluedefrance}{rgb}{0.19, 0.55, 0.91}
\definecolor{iris}{rgb}{0.35, 0.31, 0.81}
\definecolor{darkgreen}{rgb}{0.33, 0.42, 0.18}
\newcommand{\tG}{{\tt G}}
\newcommand{\tK}{{\tt K}}
\newcommand{\tI}{{\tt I}}
\newcommand{\tT}{{\tt T}}
\title{Multipath complexes of bidirectional polygonal digraphs}
\author{Luigi Caputi}
\author{Carlo Collari}
\author{Jason P. Smith}
\newcommand{\tP}{{\tt BP}}
\newcommand{\tL}{{\tt BL}}
\newcommand{\tW}{{\tt W}}
\newcommand{\taP}[1]{{\tt BP^{#1}_n}}
\newcommand{\tB}{{\tt B}}
\begin{document}

\maketitle

\begin{abstract}
In this work we study the homotopy type of multipath complexes of bidirectional path graphs and polygons, motivated by works of Vre\'cica and \v{Z}ivaljevi\'c on cycle-free chessboard
complexes (that is, multipath complexes of complete digraphs). In particular, we show that bidirectional path graphs are homotopic to spheres and that, in analogy with cycle-free chessboard complexes, multipath complexes of bidirectional polygonal digraphs are highly connected. Using a Mayer-Vietoris spectral sequence, we provide a computation of the associated homology groups. We study T-operations on graphs, and show that this corresponds to taking suspensions of multipath complexes.   
We further discuss (non) shellability properties of such complexes, and present new open questions.
\end{abstract}

\section*{Introduction}

A multipath in a directed graph is a collection of disjoint directed paths. The set of multipaths in a directed graph $G$ yields a simplicial complex $X(G)$, called the \emph{multipath complex}. 
Multipath complexes of complete directed graphs first appeared in the work of Vre\'cica and \v{Z}ivaljevi\'c~\cite{Omega}, under the name of \emph{cycle-free chessboard complexes}. Motivated by questions in homology theories of directed graphs~\cite{turner,zbMATH07965294}, multipath complexes have recently been the subject of investigations in~\cite{zbMATH07680365,zbMATH07796305,arXiv:2401.01248}. The
interest in studying the homology of multipath complexes in fact stems from their subtle relation to Hochschild homology~\cite{turner} and  symmetric homology of algebras~\cite{AultFed,AultNoFed}. In particular, Ault and Fiedorowicz proved that there is a spectral sequence, defined in terms of (suspensions of) the cycle-free chessboard complexes, 
converging to symmetric homology~\cite{AultFed}. 
The high connectivity of such complexes is essential to prove convergence properties and to compute the spectral sequence constructed by Ault and Fiedorowicz. In the case of cycle-free chessboard complexes, the required high connectivity was proven by Vre\'cica and \v{Z}ivaljevi\'c 
in~\cite[Theorem~10]{Omega}. 

In this work, motivated by the aforementioned results, we study the (high) connectivity properties of 
the multipath complexes of bidirectional 
polygonal digraphs $\tP_n$  (that is, of bidirectional cycles with $n+1$ vertices, see Figure~\ref{fig:polydash}). In particular, if we set  $\nu_n=\lfloor \frac{n-1}{2}\rfloor -1$, our main result is the following:
\begin{thmintro}[Theorem~\ref{thm:highconn}]
The multipath complex of the  digraph $\tP_{n}$  is $\nu_{n}$-connected.
\end{thmintro}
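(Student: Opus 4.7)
The plan is to prove the connectivity by a cover argument: cover $X(\tP_n)$ by multipath complexes of bidirectional paths (obtained by cutting the cycle at each edge) and then extract connectivity information via Mayer--Vietoris, using the sphere-equivalence of bidirectional path multipath complexes established earlier in the paper.

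For each undirected edge $e_i$ of the polygon $\tP_n$ (with its two directed versions $e_i^\pm$), set
\[
X_i \;:=\; X\bigl(\tP_n \setminus \{e_i^+, e_i^-\}\bigr),
\]
which is the multipath complex of the bidirectional path of length $n$ obtained by cutting the cycle at $e_i$, hence homotopy equivalent to a sphere of known dimension. The first step is to verify that the family $\{X_i\}_{i=0}^{n}$ covers $X(\tP_n)$. Any multipath omitting no undirected edge of the cycle would, at each vertex, need exactly one incoming and one outgoing edge consistent with a single global orientation around the cycle—since mixing the two local patterns at adjacent vertices forces either a double in-edge or a double out-edge at some vertex—and the resulting configuration is a directed cycle, contradicting the definition of a multipath.

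Next I would analyze the intersections. For a nonempty $I \subseteq \{0,\dots,n\}$,
\[
\bigcap_{i \in I} X_i \;=\; X\bigl(\tP_n \setminus \{e_i^\pm : i \in I\}\bigr)
\]
is the multipath complex of a disjoint union of $|I|$ bidirectional paths whose vertex counts partition $n+1$. Using the general identification $X(G_1 \sqcup G_2) \cong X(G_1) * X(G_2)$ together with the sphere theorem for each factor, each such intersection is itself a sphere (or contractible when some pieces degenerate to isolated vertices), and its dimension is a computable function of the partition determined by $I$. The connectivity of $X(\tP_n)$ would then follow from the Mayer--Vietoris spectral sequence of this cover, whose $E^1$-page is concentrated in a narrow band of bidegrees determined by these sphere dimensions, complemented by the Seifert--van Kampen theorem to control $\pi_1$.

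The main obstacle will be identifying the worst-case connectivity of the intersections—specifically when $|I|$ is large and the induced partition of $n+1$ is as unbalanced as possible—and verifying that the spectral sequence realizes the sharp bound $\nu_n = \lfloor(n-1)/2\rfloor - 1$ rather than something one lower. Careful combinatorial bookkeeping of sphere dimensions across iterated joins, and possibly a direct acyclicity argument for the low-degree differentials (or an alternative induction on $n$ run in parallel), will be the technical heart of the proof.
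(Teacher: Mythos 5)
Your cover is exactly the one the paper uses ($X_i = X(\tP_n\setminus\{e_i,e_i'\})$, each homotopy equivalent to $X(\tL_n)\simeq S^{\lfloor (n-1)/2\rfloor}$, with $h$-fold intersections equal to multipath complexes of disjoint unions of bidirectional paths, hence joins of spheres), and your covering argument is fine. But the proposal stops exactly where the proof has to be done. The entire content of the theorem is the quantitative statement that every $h$-fold intersection $Y_h=X_{i_1}\cap\dots\cap X_{i_h}$ is $(\nu_n-h+1)$-connected, and you explicitly defer this (``the main obstacle \dots will be the technical heart of the proof''). In the paper this is a short but essential computation: if the cut pieces are $\tL_{r_1},\dots,\tL_{r_s}$ with $\sum_j r_j=n+1-h$, then $Y_h\simeq S^{\sum_j \nu_{r_j}+2s-1}$, and the estimate $\sum_j\lfloor\frac{r_j-1}{2}\rfloor\ \geq\ \lfloor\frac{n-1}{2}\rfloor-h-s+1+\frac{h}{2}$ (using integrality and $h>0$) gives precisely the required bound, with the worst case being a single connected arc. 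Without this inequality neither your spectral sequence nor any nerve-type argument closes, so as written the proposal is a plan rather than a proof.

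There is also a flaw in the proposed endgame. The Mayer--Vietoris $E^1$-page of this cover is \emph{not} concentrated in a narrow band: the row $q=0$ is nonzero in every column $p$, since every nonempty intersection contributes its $H_0$. What controls that row is the nerve of the cover, which you never compute: here the nerve is the boundary of an $n$-simplex, hence $(n-2)$-connected, and one must also observe $\nu_n\leq n-2$ so that the nerve contributes nothing below degree $n-1$. The paper sidesteps both issues by invoking the generalized Nerve Lemma (\cite[Lemma~1.2]{MR1253009}): if every $t$-fold intersection is $(k-t+1)$-connected, then $\Delta$ is $k$-connected if and only if the nerve is, which takes as input exactly the intersection estimate above plus the nerve computation and immediately yields connectivity (not just homology vanishing, so no separate van Kampen/Hurewicz step is needed). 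Your route can be made to work --- the $q=0$ row assembles to the nerve homology, and van Kampen with simply connected $X_i$ and connected pairwise intersections kills $\pi_1$ for $n\geq 5$ --- but you would essentially be reproving the Nerve Lemma in this special case, and the decisive combinatorial estimate would still have to be supplied.
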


The first computations of the homotopy type of multipath complexes 
were provided in~\cite{zbMATH07796305} using techniques from combinatorial topology. In particular, it was shown that multipath complexes of oriented grids and transitive tournaments are contractible or homotopy equivalent to wedges of spheres. Multipath complexes of digraphs without directed cycles are also related to suitable matching complexes. In fact, it was shown in~\cite{arXiv:2401.01248}, that the multipath complexes  of digraphs without directed cycles are homotopy equivalent to matching complexes of bipartite (undirected) graphs -- see also Theorem~\ref{thm:multi=match}. This observation 
provides a strong relationship between multipath complexes, matching complexes and chessboard complexes.
A  consequence of this observation is that multipath complexes are not, in general, wedges of spheres as their homology may have torsion -- cf.~\cite{zbMATH07928744}. The main tool in establishing the connection between multipath complexes and matching complexes is  a blow-up operation of digraphs (see Definition~\ref{def:blowup}), and the absence of directed cycles is necessary to get homotopy equivalent complexes.  Based on these considerations, it is therefore  relevant to   analyse the homotopy type of multipath complexes in the case of directed graphs containing directed cycles. 

The easiest case of graphs with cycles, that is
oriented polygons, was investigated in~\cite{zbMATH07796305}. In this work,  we extend the study from oriented polygonal graphs to bidirectional polygonal digraphs. Further, we provide  computations of the homotopy type of multipath complexes of path graphs (that is, bidirectional linear graphs), 
and of digraphs obtained by T-operations -- see Proposition~\ref{prop:toper}. Furthermore, we study the shellability properties, and provide homology computations of their multipath complexes. In particular, using a Mayer-Vietoris spectral sequences argument, we show the following:
\begin{thmintro}[Theorem~\ref{thm:connectivity_homology}]
Let $X(\tP_n)$ be the multipath complex of the digraph $\tP_n$, for $n\geq 2$. Then, we have 
\[
\H_i(X(\tP_n)) = \begin{cases} 
\bZ  & \text{ if } i=0 \\
\bZ^2  & \text{ if } i=n-1 \\
\bZ  & \text{ if } i={\lfloor \frac{n - 1}{2} \rfloor} \text{ and } n\equiv 1,2 \mod 4\\
\bZ^3 & \text{if }   i={\lfloor \frac{n - 1}{2} \rfloor} \text{ and } n \equiv 3 \mod 4\\
\bZ  & \text{ if } i={\lfloor \frac{n - 1}{2} \rfloor}+1 \text{ and } n\equiv 0 \mod 4
\end{cases}.\]
In particular, $\tP_n$ is  not $(\nu_n+1)$-connected for $n\equiv 1,2,3 \mod 4$.  
\end{thmintro}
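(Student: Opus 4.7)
The plan is to apply a Mayer--Vietoris spectral sequence to a natural cover of $X(\tP_n)$ indexed by the $n+1$ edge pairs of the polygon. Since a vertex-disjoint union of simple directed paths on $n+1$ vertices contains at most $n$ directed edges, every multipath in $\tP_n$ must avoid at least one edge pair. Writing $X_i \subseteq X(\tP_n)$ for the subcomplex of multipaths that use neither directed version of the $i$-th edge pair of $\tP_n$, this gives a cover $X(\tP_n) = \bigcup_{i=0}^{n} X_i$. For nonempty $S \subseteq \{0,\ldots,n\}$, the intersection $X_S = \bigcap_{i \in S} X_i$ is the multipath complex of the graph obtained from $\tP_n$ by deleting the edge pairs indexed by $S$, which is a disjoint union of bidirectional path graphs. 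Using that the multipath complex of a disjoint union is the join of the multipath complexes of its components, together with the fact (established earlier in the paper) that $X(\tL_m)$ is homotopy equivalent to a sphere, each $X_S$ is a sphere whose dimension depends only on the partition of $n+1$ into arcs cut out by $S$.

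Next I would set up the Mayer--Vietoris spectral sequence
\[
E_1^{p,q} = \bigoplus_{|S|=p+1} \H_q(X_S) \;\Longrightarrow\; \H_{p+q}(X(\tP_n)).
\]
Because each $X_S$ is a sphere, the $E_1$-page is supported on two strands: a bottom strand at $q=0$, whose $d_1$-complex is the simplicial chain complex of the nerve $\Delta^n$ of the cover and therefore contributes only $\H_0(X(\tP_n))=\bZ$; and a top strand $q=d(S)$ carrying the fundamental class of the sphere $X_S$. The high-connectivity estimate from \Cref{thm:highconn} forces all classes of total degree at most $\nu_n = \lfloor (n-1)/2 \rfloor - 1$ to die in the spectral sequence, which collapses most of the $E_2$-page by range considerations.

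The remaining work is to identify the two surviving contributions. In top degree, I would show that $\H_{n-1}(X(\tP_n))=\bZ^2$ is generated by the two cyclic classes coming from the Hamiltonian directed paths: the top simplices of $X(\tP_n)$ are precisely the $2(n+1)$ Hamiltonian paths, splitting into a clockwise and a counterclockwise family of $n+1$ simplices each, and each family assembles into a cycle that does not bound. In the middle dimension, the top strand of $E_1$ organises into an explicit chain complex carrying the rotational $\bZ/(n+1)$-symmetry of $\tP_n$, and one identifies its homology with a small cellular model whose answer depends on $n \bmod 4$, reflecting how the parities of the sphere dimensions $d(a_j)$ of the arcs of $\tP_n$ combine with the cyclic structure.

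The main obstacle I anticipate is this last step: carrying out the explicit sign bookkeeping in the top strand of the spectral sequence to identify the middle-dimensional homology in all four residue classes of $n$ modulo $4$, and in particular to detect the extra $\bZ$ summand in the case $n \equiv 3 \bmod 4$ and the shift by one in the case $n \equiv 0 \bmod 4$. Once these computations are in place, the final clause of the theorem---that $X(\tP_n)$ is not $(\nu_n+1)$-connected for $n \equiv 1,2,3 \bmod 4$---follows immediately from the nonvanishing of $\H_{\lfloor (n-1)/2 \rfloor}(X(\tP_n)) = \H_{\nu_n+1}(X(\tP_n))$ in those residues.
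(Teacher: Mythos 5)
Your setup is sound as far as it goes --- the cover by the subcomplexes $X_i$ obtained by deleting the $i$-th edge pair is legitimate, each intersection $X_S$ is indeed a join of complexes $X(\tL_m)$ and hence a sphere, and the $q=0$ row of the Mayer--Vietoris spectral sequence does compute the homology of the nerve. But what you have written is a plan, not a proof, and the part you defer is exactly the content of the theorem. Knowing from Theorem~\ref{thm:highconn} that the abutment vanishes in degrees $\leq \nu_n$ only tells you that everything on the $E_\infty$-page in that range dies; it gives no information about the groups in degrees $\nu_n+1$, $\nu_n+2$ and $n-1$, which is where all the interesting answers ($\bZ$ versus $\bZ^3$, the shift when $n\equiv 0 \bmod 4$) live. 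With your cover the higher rows of $E_1$ are large: for each $p$ there are many subsets $S$ with $|S|=p+1$, the sphere dimensions $d(S)$ vary with the partition of the cycle into arcs, and the $d_1$ (and potentially higher) differentials between these fundamental classes are precisely the ``sign bookkeeping'' you name as the anticipated obstacle. Nothing in the proposal identifies these differentials, so the middle-dimensional groups are not computed; likewise the claim that the two families of Hamiltonian directed paths generate $\H_{n-1}\cong\bZ^2$ is asserted (one must check that each family is a cycle, that the two classes are independent, and that they span the kernel of $\partial_{n-1}$ on the $2(n+1)$ top simplices) rather than proved. As it stands the argument establishes only what was already known from Theorem~\ref{thm:highconn} plus the existence of a spectral sequence converging to the answer.

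For comparison, the paper avoids the large cover entirely: it covers $X(\tP_n)$ by just three pieces, $A=\mathrm{star}(e_0)$, $B=\mathrm{star}(e'_0)$ and $C=\mathrm{astar}(e_0)\cap\mathrm{astar}(e'_0)$, where $A,B$ are cones (hence contractible), $C\cong X(\tL_n)$ is a sphere by Proposition~\ref{prop:connectivitylin}, $A\cap C\cong B\cap C$ is $X(\tW_{n-1})$ with one open maximal cell removed (its homology is computed from Proposition~\ref{prop:wn} by a short Mayer--Vietoris argument), and $A\cap B=A\cap B\cap C\cong X(\tL_{n-2})$. The resulting spectral sequence has only three columns, and the two nontrivial differentials are identified using the explicit sphere representatives of Eq.~(\ref{eq:almostbid2}) and the equality $A\cap B\cap C=A\cap B$. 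If you want to salvage your route, you would need to do for your $(n+1)$-fold cover what the paper does for its three-fold one: write down explicit generators of the sphere classes $\H_{d(S)}(X_S)$ and compute the $d_1$ maps between them in each congruence class of $n$ modulo $4$; until that is done, the theorem is not proved.
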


We remark  that we can not directly conclude from this computation that multipath complexes of bidirectional polygonal digraphs are, up to homotopy, wedges of spheres, and whether this is true is still open. Furthermore, we do not know if the given bound~$\nu_n$ from Theorem~\ref{thm:highconn} is sharp for 
$n\equiv 0 \mod 4$. This question is the analogue of a conjecture by Vre\'cica and \v{Z}ivaljevi\'c (see Conjecture~\ref{cong:VZ}) for bidirectional polygonal digraphs, and this also is still open.

\subsection*{Acknowledgments} LC was supported by the Starting Grant 101077154 “Definable Algebraic Topology” from the European Research Council of Martino Lupini. LC and CC are also grateful to INdAM-GNSAGA. CC partially acknowledges the
MUR-PRIN project 2022NMPLT8 and the MIUR Excellence Department Project awarded to the Department of Mathematics, University of Pisa, CUP I57G22000700001.

\section{Multipath complexes}
Henceforth, unless otherwise specified, by a digraph we shall mean a directed graph without multiple edges, that is between any pair of vertices there are at most two oriented edges going in opposite direction.

 Given an edge $e = (v, w)$ in the digraph $\tG$ we call the vertex $v$ the source of $e$, denoted $s(e)$, while the vertex $w$ is the
target of $e$, denoted $t(e)$. A  \emph{simple path} in $\tG$  is a sequence of edges $e_1,...,e_n$ of $\tG$ such that~$s(e_{i+1})=t(e_i)$ for $i=1,\dots,n-1$, and no vertex is encountered twice, i.e.~if $s(e_i) = s(e_j)$ or $t(e_i) = t(e_j)$, then $i=j$, and is not a cycle, i.e.~$s(e_1)\neq t(e_n)$ -- cf.~Figure~\ref{fig:nstep}. 

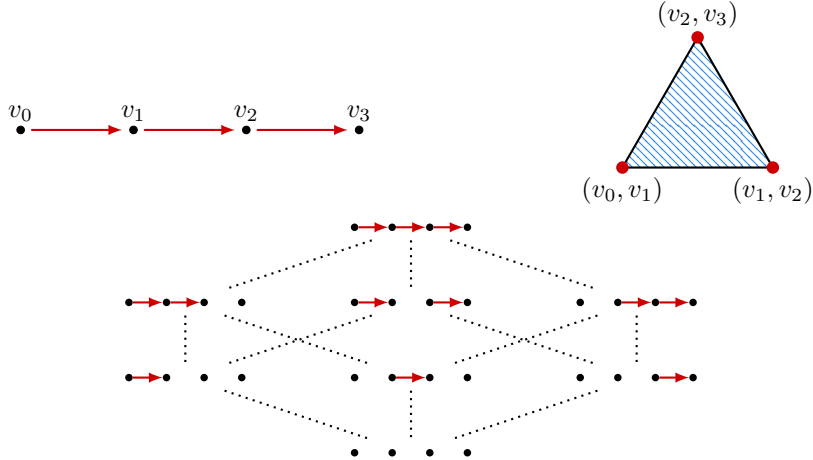
\begin{figure}[h]
\centering
	\begin{tikzpicture}[baseline=(current bounding box.center),line join = round, line cap = round]
		\tikzstyle{point}=[circle,thick,draw=black,fill=black,inner sep=0pt,minimum width=2pt,minimum height=2pt]
		\tikzstyle{arc}=[shorten >= 8pt,shorten <= 8pt,->, thick]
		\def\c{8}\def\d{.5}
		\node[above] (v0) at (0-\c,0+\d) {$v_0$};\draw[fill] (0-\c,0+\d)  circle (.05);
		\node[above] (v1) at (1.5-\c,0+\d) {$v_1$};\draw[fill] (1.5-\c,0+\d)  circle (.05);
		\node[above] (v2) at (3-\c,0+\d) {$v_{2}$};\draw[fill] (3-\c,0+\d)  circle (.05);
		\node[above] (v3) at (4.5-\c,0+\d) {$v_{3}$};\draw[fill] (4.5-\c,0+\d)  circle (.05);
		
		\draw[thick, bunired, -latex] (0.15-\c,0+\d) -- (1.35-\c,0+\d);
		\draw[thick, bunired, -latex] (1.65-\c,0+\d) -- (2.85-\c,0+\d);
		\draw[thick, bunired, -latex] (3.15-\c,0+\d) -- (4.35-\c,0+\d);
		
		\node (e1) at (0,0) {};
		\node (e2) at (2,0) {};
		\node (e3) at (60:2) {};
		\draw[pattern=north west lines, pattern color=bluedefrance,thick] (e1.center) -- (e2.center) -- (e3.center) -- (e1.center);
		\node[circle,fill=bunired,scale=0.5] at (e1) {};\node[below] at (e1) {$(v_0,v_1)$};
        \node[circle,fill=bunired,scale=0.5] at (e2) {};\node[below] at (e2) {$(v_1,v_2)$};
        \node[circle,fill=bunired,scale=0.5] at (e3) {};\node[above] at (e3) {$(v_2,v_3)$};
    \end{tikzpicture}
	\begin{tikzpicture}
        \def\y{1}\def\x{3}
        \tikzstyle{sp}=[circle,fill=black,scale=0.3]
        \tikzstyle{se}=[thick, bunired, -latex]
        \node (p123) at (0*\x,3*\y){\begin{tikzpicture}[scale=0.5]\node[sp] (0) at (0,0){};\node[sp] (1) at (1,0){};\node[sp] (2) at (2,0){};\node[sp] (3) at (3,0){};\draw[se] (0) -- (1);\draw[se] (1) -- (2);\draw[se] (2) -- (3);\end{tikzpicture}};
        
        \node (p12) at (-1*\x,2*\y){\begin{tikzpicture}[scale=0.5]\node[sp] (0) at (0,0){};\node[sp] (1) at (1,0){};\node[sp] (2) at (2,0){};\node[sp] (3) at (3,0){};\draw[se] (0) -- (1);\draw[se] (1) -- (2);\end{tikzpicture}};
        \node (p13) at (0*\x,2*\y){\begin{tikzpicture}[scale=0.5]\node[sp] (0) at (0,0){};\node[sp] (1) at (1,0){};\node[sp] (2) at (2,0){};\node[sp] (3) at (3,0){};\draw[se] (0) -- (1);\draw[se] (2) -- (3);\end{tikzpicture}};
        \node (p23) at (1*\x,2*\y){\begin{tikzpicture}[scale=0.5]\node[sp] (0) at (0,0){};\node[sp] (1) at (1,0){};\node[sp] (2) at (2,0){};\node[sp] (3) at (3,0){};\draw[se] (1) -- (2); \draw[se] (2) -- (3);\end{tikzpicture}};
        \node (p1) at (-1*\x,1*\y){\begin{tikzpicture}[scale=0.5]\node[sp] (0) at (0,0){};\node[sp] (1) at (1,0){};\node[sp] (2) at (2,0){};\node[sp] (3) at (3,0){};\draw[se] (0) -- (1);\end{tikzpicture}};
        \node (p2) at (0*\x,1*\y){\begin{tikzpicture}[scale=0.5]\node[sp] (0) at (0,0){};\node[sp] (1) at (1,0){};\node[sp] (2) at (2,0){};\node[sp] (3) at (3,0){};\draw[se] (1) -- (2);\end{tikzpicture}};
        \node (p3) at (1*\x,1*\y){\begin{tikzpicture}[scale=0.5]\node[sp] (0) at (0,0){};\node[sp] (1) at (1,0){};\node[sp] (2) at (2,0){};\node[sp] (3) at (3,0){};\draw[se] (2) -- (3);\end{tikzpicture}};
        \node (p0) at (0*\x,0*\y){\begin{tikzpicture}[scale=0.5]\node[sp] (0) at (0,0){};\node[sp] (1) at (1,0){};\node[sp] (2) at (2,0){};\node[sp] (3) at (3,0){};\end{tikzpicture}};
        \draw[thick, dotted] (p123) -- (p12) -- (p1) -- (p0);
        \draw[thick,dotted] (p123) -- (p23) -- (p2) -- (p0);
        \draw[thick,dotted] (p123) -- (p13) -- (p3) -- (p0);
        \draw[thick,dotted] (p12) -- (p2);
        \draw[thick,dotted] (p23) -- (p3);
        \draw[thick,dotted] (p13) -- (p1);
	\end{tikzpicture}

	\caption{The coherently oriented linear graph $\tI_3$ (top left), the multipath complex $X(\tI_3)$ (top right), and the path poset $P(\tI_3)$ (bottom).}
	\label{fig:nstep}
\end{figure}

We are interested in disjoint sets of simple paths; following~\cite{turner, zbMATH07965294, zbMATH07680365}, we call them multipaths:

\begin{defn}\label{def:multipaths}
A \emph{multipath} of a digraph~$\tG$ is a spanning subgraph such that each connected component is either a vertex or a simple path. The \emph{length} of a multipath is the number of edges. 
\end{defn}

The set of multipaths of $\tG$ has a natural partially ordered structure: the \emph{path poset} of $\tG$ is the poset $(P(\tG),<)$, that is, the set of multipaths of $\tG$ (including the multipath with no edges) ordered by the relation of ``being a subgraph''. 
To the path poset we can associate a simplicial complex, which we call the multipath complex  -- cf.~\cite[Definition~6.4]{zbMATH07680365}:

\begin{defn}\label{def:pathcomplx}
For a digraph $\tG$, the \emph{multipath complex} $X(\tG)$ is the simplicial complex whose face poset (augmented to include the empty simplex~$\emptyset$) is the path poset $P(\tG)$. 
\end{defn}

Since being a multipath is a monotone property of digraphs, it follows that $X(\tG)$ is a well-defined simplicial complex. The study of the homotopy type of multipath complexes was initiated in~\cite{zbMATH07796305}, where it was shown that multipath complexes of polygonal graphs are homotopy equivalent to wedges of spheres. The aim of this paper is to explore the structure of bidirectional polygonal graphs. Similarly, the multipath complex of  transitive tournaments is homotopy equivalent to a wedge of spheres by \cite[Theorem~5.1]{zbMATH07796305}. 

Recall that the \emph{indegree} (resp. \emph{outdegree}) of a vertex $v$ is the number of edges with target (resp.~source)~$v$.

\begin{defn}[{\cite[Definition~4.15]{arXiv:2401.01248}}]\label{def:blowup}
Let $\tG$ be a digraph, and let $v\in V(\tG)$ with both indegree and outdegree different from $0$. The \emph{blow-up} of $\tG$ at $v$ is the digraph $B(\tG,v)$ obtained from $\tG$ as follows: the vertices of $B(\tG,v)$ are the same as $\tG$ except for $v$, which is replaced by $v_{\mathrm{in}}$ and $v_{\mathrm{out}}$. Given $v',v''\in V(B(\tG,v))$ the edges from $v'$ to $v''$ are in bijection with 
\begin{enumerate}[label = (\alph*)]
\item the edges between the corresponding vertices in $\tG$, if $\{v',v''\}\cap \{v_{\mathrm{in}},v_{\mathrm{out}}\} = \emptyset$;
\item the edges from $v'$ to $v$, if $v'\neq v_{\mathrm{out}}$ and $v''=v_{\mathrm{in}}$;
\item the edges from $v$ to $v''$, if $v'=v_{\mathrm{out}}$ and $v''\neq v_{\mathrm{in}}$;
\item the empty set, in the remaining cases.
\end{enumerate}
If the indegree or the outdegree of $v$ are zero, then we set $B(\tG,v) \coloneqq \tG$.
The \emph{blow-up} of $\tG$ is the digraph $B(\tG)$ obtained from $\tG$ by iteratively blowing-up all vertices in $\tG$ one after the other; see Figure~\ref{fig:blowup} for an illustrative example.
\end{defn}

Note that the blow-up of digraphs does not depend on the order in which we blow-up the vertices. Furthermore, the underlying undirected graph of the blow-up of a digraph is always a bipartite graph.

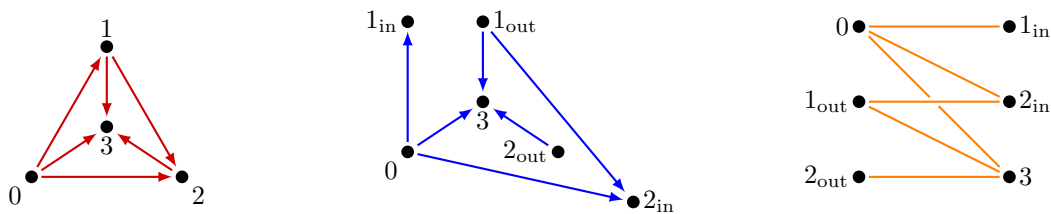
\begin{figure}[h]
    \centering
    \begin{tikzpicture}[scale = 2, thick]
    %%%% Triangle
    \node (a) at (0,0) {};
    \node (b) at (.5,.866) {};
    \node (c) at (1,0) {};
    \node (d) at (.5,.333) {};
    
    \node at (0,0) [below left] {$ 0$};
    \node at (.5,.866) [above] {$ 1$};
    \node at (1,0) [below right] {$ 2$};
    \node at (.5,.333) [below] {$3$};

    \draw[black, fill] (a) circle (.035);
    \draw[black, fill] (b) circle (.035);
    \draw[black, fill] (c) circle (.035);
    \draw[black, fill] (d) circle (.035);

    \draw[-latex, bunired] (a) -- (b);
    \draw[-latex, bunired] (a) -- (c);
    \draw[-latex, bunired] (b) -- (c);
    \draw[-latex, bunired] (a) -- (d);
    \draw[-latex, bunired] (b) -- (d);
    \draw[-latex, bunired] (c) -- (d);
    
    \begin{scope}[shift = {+(2.5,0.166)}]

    \node (a) at (0,0) {};
    \node (bout) at (.5,.866) {};
    \node (bin) at (0,.866) {};
    \node (cout) at (1,0) {};
    \node (cin) at (1.5,-.333) {};
    \node (d) at (.5,.333) {};
    
    \node at (0,0) [below left] {$ 0$};
    \node at (.5,.866) [right] {$1_{\mathrm{out}}$};
    \node at (1,0) [left] {$2_{\mathrm{out}}$};
    \node at (0,.866) [left] {$1_{\mathrm{in}}$};
    \node at (1.5,-.333) [ right] {$2_{\mathrm{in}}$};
    \node at (.5,.333) [below] {$3$};

    \draw[black, fill] (a) circle (.035);
    \draw[black, fill] (bin) circle (.035);
    \draw[black, fill] (cin) circle (.035);
    \draw[black, fill] (bout) circle (.035);
    \draw[black, fill] (cout) circle (.035);
    \draw[black, fill] (d) circle (.035);

    \draw[-latex, blue] (a) -- (bin);
    \draw[-latex, blue] (a) -- (cin);
    \draw[-latex, blue] (bout) -- (cin);
    \draw[-latex, blue] (a) -- (d);
    \draw[-latex, blue] (bout) -- (d);
    \draw[-latex, blue] (cout) -- (d);
    \end{scope}

        \begin{scope}[shift = {+(5.5,0)}]

    \node (a) at (0,1) {};
    \node (b) at (0,0.5) {};
    \node (c) at (0,0) {};
    \node (ap) at (1,1) {};
    \node (bp) at (1,0.5) {};
    \node (cp) at (1,0) {};
    
    \node at (0,1) [left] {$ 0$};
    \node at (0,0.5) [left] {$1_{\mathrm{out}}$};
    \node at (0,0) [left] {$2_{\mathrm{out}}$};
    \node at (1,1) [right] {$1_{\mathrm{in}}$};
    \node at (1,0.5) [right] {$2_{\mathrm{in}}$};
    \node at (1,0) [right] {$3$};

    \draw[black, fill] (a) circle (.035);
    \draw[black, fill] (b) circle (.035);
    \draw[black, fill] (c) circle (.035);
    \draw[black, fill] (ap) circle (.035);
    \draw[black, fill] (bp) circle (.035);
    \draw[black, fill] (cp) circle (.035);

    \draw[orange] (a) -- (ap);
    \draw[orange] (a) -- (bp);
    \draw[orange] (a) -- (cp);
        \draw[white, fill] (0.5,0.5) circle (.035);
    \draw[orange] (b) -- (bp);
    \draw[orange] (b) -- (cp);
    \draw[orange] (c) -- (cp);
    \end{scope}
    
\end{tikzpicture}
\caption{Transitive tournament $\tT_3$ (in red on the left), its blow-up (in blue at the centre), and the associated bipartite graph $\tB_3$ (in orange on the right).}
\label{fig:blowup}
\end{figure}

Denote by $\iota(\tG)$ the undirected graph underlying the digraph $\tG$. Then, if $\tG$ has no oriented cycles, we have a close relation of multipath complexes to matching complexes: 

\begin{thm}[{\cite[Theorem~4.20]{arXiv:2401.01248}}]\label{thm:multi=match}
Let $\tG$ be a digraph without oriented cycles. Then, the multipath complex~$X(\tG)$ is isomorphic, as a simplicial complex, to the matching complex of $\iota(B(\tG))$. 
\end{thm}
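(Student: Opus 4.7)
The plan is to construct an explicit simplicial isomorphism from $X(\tG)$ to the matching complex of $\iota(B(\tG))$, using the canonical bijection $\phi$ between $E(\tG)$ and $E(B(\tG))$ built into Definition~\ref{def:blowup}. Explicitly, an edge $(v,w)$ of $\tG$ is sent to the edge of $B(\tG)$ whose source is $v_{\mathrm{out}}$ if $v$ has positive indegree and $v$ itself otherwise, and whose target is $w_{\mathrm{in}}$ if $w$ has positive outdegree and $w$ itself otherwise. The $0$-simplices of $X(\tG)$ are in canonical bijection with $E(\tG)$, since a length-$1$ multipath is a single edge together with the remaining isolated vertices; dually the $0$-simplices of the matching complex of $\iota(B(\tG))$ are $E(B(\tG))$. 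So $\phi$ is already defined at the vertex level, and the content to verify is that $\phi$ and its inverse send higher faces to higher faces.

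The core combinatorial statement to establish is: a subset $S\subseteq E(\tG)$ is a multipath of $\tG$ if and only if $\phi(S)$ is a matching of $\iota(B(\tG))$. The forward direction is the easier half. In any multipath, every vertex $v\in V(\tG)$ has at most one outgoing and at most one incoming edge in $S$. Under $\phi$, the edges of $B(\tG)$ incident to $v_{\mathrm{out}}$ are exactly the images of edges out of $v$ in $\tG$, those incident to $v_{\mathrm{in}}$ are the images of edges into $v$, and $v_{\mathrm{in}}$, $v_{\mathrm{out}}$ are distinct vertices of $B(\tG)$. Hence no vertex of $B(\tG)$ meets two distinct edges of $\phi(S)$, i.e.\ $\phi(S)$ is a matching.

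For the converse, I would invoke the structural observation that any subgraph of a digraph in which every vertex has in- and outdegree at most $1$ decomposes into a disjoint union of simple directed paths and simple directed cycles. This is where the acyclicity hypothesis enters crucially: it rules out the cycle components, so such an $S$ is automatically a multipath of $\tG$. Therefore $\phi$ restricts to a bijection between multipaths of $\tG$ and matchings of $\iota(B(\tG))$; since both families are closed under passing to subsets, $\phi$ induces the claimed simplicial isomorphism.

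The main obstacle, mild but worth flagging, is the bookkeeping around vertices of $\tG$ with indegree or outdegree equal to zero, which by Definition~\ref{def:blowup} are not split in $B(\tG)$. One must phrase the bijection $\phi$ so that the correspondence between ``incident to $v_{\mathrm{in}}$ in $B(\tG)$'' and ``incoming at $v$ in $\tG$'', and dually for $v_{\mathrm{out}}$, remains unambiguous in these degenerate cases. Once this has been set up carefully the argument above applies uniformly, and no further complications arise.
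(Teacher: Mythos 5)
Your argument is correct: the edge bijection $\phi$ built from Definition~\ref{def:blowup}, the observation that multipaths are exactly the edge sets with in- and outdegree at most $1$ and no directed cycle, and the use of acyclicity to exclude cycle components in the converse direction together give the claimed simplicial isomorphism, including the careful handling of unsplit vertices. Note that this paper does not prove the statement itself but imports it from the cited reference \cite[Theorem~4.20]{arXiv:2401.01248}, whose proof proceeds by essentially the same multipath--matching correspondence through the blow-up, so your route is the standard one.
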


As a consequence, multipath complexes can have torsion -- see also~\cite[Proposition~4.5]{zbMATH07928744}. In view of Theorem~\ref{thm:multi=match}, the homotopy type of multipath complexes of digraphs without oriented cycles can be computed as the homotopy type of matching complexes of bipartite graphs. It is yet unclear what is the relation in the presence of oriented cycles. 

\subsection{ Cycle-free chessboard complexes}

Recall that an $(m\times n)$-chessboard is the set $A_{m,n}=[m]\times [n]\subseteq \mathbb{Z}^2$, where we denote by $[n]$ the set $\{1,\dots,n\}$. 

A chessboard $A_n=[n]\times[n]$ can be interpreted as a complete digraph $\tK_n$, that is the complete digraph on $n$ vertices and edges $i\to j$ for all $i,j\in [n]$. Clearly, each $(i,j)\in A_n$ corresponds to an arrow $i\to j$ in~$\tK_n$.  Then, the \emph{chessboard complex}~$\Delta_n$ is the  complex of all subgraphs of $\tK_n$ such that the associated connected
components are either directed cycles or directed paths. 

\begin{defn}[\cite{Omega}]
    The \emph{cycle-free chessboard complex}~$\Omega_n$ is the subcomplex of $\Delta_n$ spanned by cycle-free digraphs.
\end{defn}

More precisely, the cycle-free chessboard complex~$\Omega_n$ is the complex of all subgraphs of $\tK_n$ such that the associated connected
components are  directed paths. Therefore, $\Omega_n$ is the multipath complex $X(\tK_n)$. 

By \cite[Proposition~9]{Omega}, the cycle-free chessboard complexes are simply connected for all $n\geq 5$. More generally, in \cite[Section~7]{Omega} it was shown that~$\Omega_n$ is highly connected, that is:

\begin{thm}[{\cite[Theorem~10]{Omega}}]
    The cycle-free chessboard complex~$\Omega_n$ is $\mu_n$-connected, where \[\mu_n=\left\lfloor \frac{2n-1}{3}-2\right\rfloor.\]
\end{thm}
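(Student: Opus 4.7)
The plan is to proceed by induction on $n$, in the spirit of the classical connectivity proofs for chessboard and matching complexes. The base cases $n \le 4$ give $\mu_n \le -1$, so the statement is automatic (every non-empty complex is $(-1)$-connected). For the inductive step, I would combine a link--deletion decomposition with a nerve/cover argument.

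First I would fix a vertex $\sigma = (i \to j)$ of the simplicial complex $\Omega_n$ (that is, a directed edge of $\tK_n$) and write $\Omega_n = \st(\sigma) \cup \del_\sigma(\Omega_n)$, with intersection $\mathrm{lk}(\sigma)$. Since $\st(\sigma)$ is contractible, a Mayer--Vietoris/homotopy excision argument reduces the problem to bounding the connectivity of $\mathrm{lk}(\sigma)$ and of the deletion $\del_\sigma(\Omega_n)$. The link $\mathrm{lk}(\sigma)$ consists of multipaths $M$ in the digraph $\tK_n\setminus\{(i,j)\}$ compatible with $(i,j)$: vertex $i$ must have outdegree zero in $M$, vertex $j$ must have indegree zero in $M$, and $M$ must contain no directed path from $j$ to $i$. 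I would try to show that $\mathrm{lk}(\sigma)$ deformation retracts onto a subcomplex identifiable, after a blow-up/identification in the spirit of Definition~\ref{def:blowup}, with a cycle-free chessboard complex on $n-2$ vertices (possibly up to a suspension), so that the inductive hypothesis yields the desired connectivity. The deletion $\del_\sigma(\Omega_n)$ is the multipath complex of $\tK_n$ with the single edge $(i,j)$ removed; a secondary inductive argument, or a discrete Morse matching that collapses pairs of faces differing by one edge at the chosen vertex, should show it is at least $\mu_n$-connected.

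The main obstacle is the global acyclicity constraint. Unlike the ordinary chessboard complex, $\mathrm{lk}(\sigma)$ is not literally a smaller cycle-free chessboard complex, because the condition ``no directed path from $j$ to $i$'' is non-local and cannot be read off from a single vertex or edge. Handling this cleanly will probably require either a refined spectral-sequence argument (as in the Mayer--Vietoris approach used for Theorem~\ref{thm:connectivity_homology}) that filters multipaths by their interaction with a chosen vertex $p\in[n]$, or a carefully designed acyclic matching that pairs off cycle-creating configurations in a controlled way. The $2/3$ growth rate of $\mu_n$ strongly suggests that the induction should advance three vertices at a time rather than one, so that each unit of connectivity gained costs roughly $3/2$ vertices---balancing the arithmetic of $\mu_n = \lfloor(2n-1)/3\rfloor - 2$ and mirroring the $3$-rook configurations that obstruct connectivity in the classical chessboard-complex proof.
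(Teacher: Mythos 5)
This theorem is not proved in the paper you were given: it is quoted verbatim from Vre\'cica and \v{Z}ivaljevi\'c \cite[Theorem~10]{Omega}, so the relevant comparison is with their argument. Their route is essentially homological rather than the star/link/deletion induction you propose: simple connectivity of $\Omega_n$ for $n\geq 5$ is established separately (\cite[Proposition~9]{Omega}), the vanishing of reduced homology in degrees $\leq\mu_n$ is proved by an induction organised around explicit subcomplexes/filtrations of $\Omega_n$ (tracking how a distinguished vertex is used) and the associated exact sequences, and the connectivity statement then follows from the Hurewicz theorem. Your plan, by contrast, tries to run the induction inside a single cover $\Omega_n=\mathrm{st}(\sigma)\cup(\Omega_n\setminus\mathrm{st}(\sigma))$ for one directed edge $\sigma=(i,j)$, and this is where it breaks down.

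Concretely, there are two load-bearing steps that are asserted rather than proved, and both are genuinely problematic. First, $\mathrm{lk}(\sigma)$ is not a cycle-free chessboard complex on $n-2$ vertices, even up to suspension: a multipath compatible with $(i,j)$ may still use $i$ as a target and $j$ as a source (extending the path through $\sigma$), and the obstruction ``no directed path from $j$ back to $i$'' is global, exactly as you note; deferring this with ``I would try to show'' leaves the inductive step unestablished, and the ``possibly up to a suspension'' caveat is doing essential arithmetic work (without that extra suspension, gaining one unit of connectivity per two deleted vertices only yields a growth rate of $1/2$, short of the $2/3$ in $\mu_n$). Second, the deletion $\Omega_n\setminus\mathrm{st}(\sigma)$ is the multipath complex of $\tK_n$ with a single directed edge removed, which is not an $\Omega_m$ for any smaller $m$; claiming it is $\mu_n$-connected via ``a secondary induction or a discrete Morse matching'' silently assumes a connectivity theorem for a much larger family of digraphs than the one being proved, with no matching or retraction actually constructed. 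Finally, the base case is not vacuous: $\mu_4=0$, so $\Omega_4$ must be shown connected rather than dismissed. As written the proposal is a programme whose two key steps are open, not a proof.
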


The constant $\mu_n$ is the best possible for $n=3k+2$~\cite[Section~8]{Omega}. However, whether $\mu_n$ is always the best possible connectivity bound it still is an open conjecture:

\begin{conj}[\cite{Omega}]\label{cong:VZ}
    The connectivity bound $\mu_n$ is the best possible, that is, we have 
    \[
    H_{\mu_{n}+1}(\Omega_n)=H_{\mu_{n}+1}(X(\tK_n))\neq 0 
    \]
     for each $n\geq 2$.
\end{conj}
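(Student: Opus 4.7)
The conjecture asserts sharpness of the connectivity bound $\mu_n$ for $\Omega_n = X(\tK_n)$, a problem that has remained open since \cite{Omega}. Since $\Omega_n$ is already known to be $\mu_n$-connected, by the Hurewicz theorem it is equivalent to exhibit a non-bounding simplicial $(\mu_n+1)$-cycle; the plan is to construct such a cycle explicitly and then certify it is non-trivial by detecting it against a suitable cocycle.

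The starting point is the sharpness proof for the residue class $n = 3k+2$ given in \cite[Section~8]{Omega}, which produces an explicit spherical cycle built from disjoint directed $2$-paths that exhaust the vertex set. For the remaining residues $n \equiv 0, 1 \pmod 3$ the plan is to decorate such a ``diagonal'' cycle with a small controlled number of $1$-paths or isolated vertices that absorb the surplus, where the precise count of extra edges is dictated by the jump of $\mu_n$ across residue classes. This produces a family of candidate chains indexed by ordered partitions of $n$ of a prescribed shape, and one symmetrises over an appropriate permutation orbit to obtain a closed class in the target dimension.

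To certify non-triviality, I would try two complementary strategies. The first is an equivariant argument: the group $S_n \times S_n$ acts on $\Omega_n$ by relabelling sources and targets, and one can attempt to detect the candidate class in a sign-isotypic component by pairing against an explicit simplicial cocycle constructed from sign characters, in the spirit of the transfer arguments of \cite[Section~7]{Omega}. The second is to exploit the Ault--Fiedorowicz spectral sequence relating (suspensions of) $\Omega_n$ to symmetric homology: non-vanishing of the $E_\infty$-page in a known range, over a judiciously chosen coefficient ring, would force $H_{\mu_n+1}(\Omega_n) \neq 0$.

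The central obstacle is precisely what separates $n\equiv 2 \pmod 3$ from the other residues. In the known case, the chessboard is ``tight'' and every vertex is saturated by the $2$-paths of the candidate cycle; for $n \equiv 0, 1 \pmod 3$ the candidate chain has unused vertices, and this slack provides extra degrees of freedom through which a priori the class might bound. Ruling this out seems to require either a substantially finer analysis of the simplicial boundary on $\Omega_n$ (for instance via discrete Morse theory or shellability, which, as highlighted for $\tP_n$ in this paper, are themselves delicate even in much simpler digraphs), or a new structural input, perhaps drawn from Theorem~\ref{thm:multi=match} applied to acyclic approximations of $\tK_n$ obtained by deleting a spanning arborescence and tracking the resulting long exact sequence.
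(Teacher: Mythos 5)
The statement you were asked about is not a theorem of this paper at all: it is Conjecture~\ref{cong:VZ}, quoted from Vre\'cica and \v{Z}ivaljevi\'c, and the paper explicitly records that it is still open (sharpness of $\mu_n$ is only known for $n=3k+2$, by \cite[Section~8]{Omega}). So there is no proof in the paper to compare against, and your text does not supply one either: it is a research plan, not an argument. Concretely, for $n\equiv 0,1 \pmod 3$ you never actually construct the candidate $(\mu_n+1)$-cycle (the ``decoration'' of the diagonal cycle by $1$-paths or isolated vertices is described only by analogy, with no verification that the resulting chain is closed and lives in the right dimension), and neither of your two certification strategies is carried out. The equivariant/sign-isotypic pairing is only gestured at, and the appeal to the Ault--Fiedorowicz spectral sequence is close to circular: that spectral sequence has $E^1$-terms built from (suspensions of) the complexes $\Omega_n$ themselves, so deducing $H_{\mu_n+1}(\Omega_n)\neq 0$ from non-vanishing of its $E_\infty$-page would require independent knowledge of symmetric homology in the relevant range plus control of all differentials, none of which you establish.

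You in fact name the central obstacle yourself: for $n\equiv 0,1\pmod 3$ the candidate chain leaves vertices unsaturated, and you have no mechanism ruling out that the class bounds. That is precisely the open part of the conjecture, so the proposal as written has a gap that coincides with the entire content of the statement. A correct response here would have been to identify the statement as an open conjecture (as the paper does) rather than to offer a proof; if you do pursue the problem, the honest deliverables would be an explicit cycle for a single new residue class together with a computed pairing against an explicit cocycle, not a catalogue of strategies. Minor additional caution: the reduction via Hurewicz to exhibiting a non-bounding $(\mu_n+1)$-cycle needs simple connectivity, which \cite[Proposition~9]{Omega} gives only for $n\geq 5$, so the small cases $n=2,3,4$ in the conjecture's range would need separate (direct homological) treatment.
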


\section{Bidirectional polygonal digraphs}

The aim of this section is to provide a connectivity bound for multipath complexes of bidirectional linear graphs and bidirectional polygonal graphs (i.e. bidirectional cycle graphs), and explicit homology computations of both. Recall first the following result, allowing us to deduce the homotopy type of a multipath complex from the homotopy type of its pieces.

\begin{lem}[{\cite[Lemma~10.4(ii)]{BjorTopMeth}}]\label{lem:BjornerLemma}
Suppose that $X$ is a simplicial complex which can be written as the union of subcomplexes $X_{0},\dots, X_{n}$ such that:
\begin{enumerate}[label = (\alph*)]
\item $X_i$ is contractible for each $i= 0,\dots, n$, and
\item $X_i\cap X_j \subseteq X_0$ for all $i,j\in \{ 1,..,n\}$ and $i\neq j$.
\end{enumerate}
There is a homotopy equivalence
$ X \simeq \bigvee_{i=1}^n \Sigma(X_{0} \cap X_{i}) $,
where $\Sigma(Y)$ denotes the topological suspension of~$Y$.
\end{lem}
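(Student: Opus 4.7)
The plan is to first replace each contractible piece $X_i$ (for $i\geq 1$) by the cone on its intersection with $X_0$, and then collapse the contractible subcomplex $X_0$. Set $Y_i \coloneqq X_0 \cap X_i$. Since the inclusion $Y_i \hookrightarrow X_i$ is a cofibration (every inclusion of subcomplexes of a simplicial complex is) and $X_i$ is contractible, the pair $(X_i, Y_i)$ is homotopy equivalent, relative to $Y_i$, to the pair $(CY_i, Y_i)$, where $CY_i$ denotes the (topological) cone on $Y_i$. This is a standard fact: any contractible space containing a cofibrantly embedded subspace $Y$ is $Y$-homotopy equivalent to $CY$.

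Next, by hypothesis (b), for distinct $i,j\in\{1,\dots,n\}$ the intersection $X_i\cap X_j$ lies inside $X_0$, so $X_i\cap X_j=Y_i\cap Y_j$. Thus the pieces $X_1,\dots,X_n$ meet pairwise only inside $X_0$, and one can perform the cone replacement simultaneously in each piece without changing the pattern of overlaps. The result is a homotopy equivalence
\[
X \;\simeq\; X_0 \cup_{Y_1} CY_1 \cup_{Y_2} CY_2 \cup \cdots \cup_{Y_n} CY_n .
\]
Since $X_0$ is contractible and its inclusion into the right-hand side is a cofibration, the quotient map collapsing $X_0$ to a point is a homotopy equivalence. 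In the quotient each cone $CY_i$ has its base $Y_i$ identified to a single point (the image of $X_0$), so it becomes the unreduced suspension $\Sigma Y_i$; all of these suspensions are wedged together at the collapsed basepoint. Hence
\[
X \;\simeq\; \bigvee_{i=1}^{n} \Sigma(X_0\cap X_i),
\]
as required.

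The main obstacle, which I expect to be the most delicate point to spell out rigorously, is justifying that the cone replacements can be glued consistently along $X_0$: one needs to see that the pair-homotopy equivalences $(X_i, Y_i) \simeq (CY_i, Y_i)$ assemble into a single homotopy equivalence of the whole union. Condition~(b) is precisely what makes this work, because it ensures that different pieces $X_i, X_j$ interact only through $X_0$, so the replacement on $X_i$ is independent of the replacement on $X_j$. A clean way to formalise this is to model the situation as a homotopy pushout over the discrete diagram indexed by $\{1,\dots,n\}$ with apex $X_0$; alternatively, one may invoke the nerve lemma, observing that the nerve of the cover $\{X_0,X_1,\dots,X_n\}$ is a cone with apex the vertex corresponding to $X_0$, which reduces the computation to the same wedge of suspensions.
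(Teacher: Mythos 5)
Your argument is correct, and it is essentially the standard proof of the cited result: the paper itself gives no proof of this lemma (it simply quotes Bj\"orner's Lemma~10.4(ii)), and Bj\"orner's argument is the same collapse-$X_0$ computation, usually phrased as $X \simeq X/X_0 = \bigvee_i X_i/(X_0\cap X_i)$ followed by $X_i/(X_0\cap X_i)\simeq \Sigma(X_0\cap X_i)$ from contractibility of $X_i$ and the cofibration property; your cone-replacement step is just this identification carried out before collapsing rather than after, and condition (b) is used exactly as you use it, to see that $X_i$ meets the rest of the complex only in $X_0\cap X_i$. One caveat: your closing aside that one could ``alternatively invoke the nerve lemma'' is not a valid alternative, since the nerve lemma needs the intersections $X_0\cap X_i$ to be contractible (or suitably connected), which is not assumed here --- indeed the nerve of the cover is a contractible cone, while $X$ is generally not contractible, so the nerve carries no information in this situation; drop that remark and keep the homotopy-pushout (or quotient) formalisation, which is sound.
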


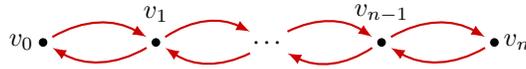
\begin{figure}[h]
\begin{tikzpicture}[baseline=(current bounding box.center)]
		\tikzstyle{point}=[circle,thick,draw=black,fill=black,inner sep=0pt,minimum width=2pt,minimum height=2pt]
		\tikzstyle{arc}=[shorten >= 8pt,shorten <= 8pt,->, thick]
		
		%\node at (-1,0) {$I_n =$};
		\node (v0) at (0,0) {};
		\node[left] at (0,0) {$v_0$};
		\node[above] at (1.5,0.15) {$v_1$};
		\node[above] at (4.5,0.15) {$v_{n-1}$};
		\node[right] at (6,0) {$v_n$};
		\draw[fill] (0,0)  circle (.05);
		\node (v1) at (1.5,0) {};
		\draw[fill] (1.5,0)  circle (.05);
		\node (v2) at (3,0) {\dots};
		\node (v4) at (4.5,0) {};
		\draw[fill] (4.5,0)  circle (.05);
		\node (v5) at (6,0) {};
		\draw[fill] (6,0)  circle (.05);
		
		\draw[thick, bunired, -latex] (v0) to[bend left] (v1);
  		\draw[thick, bunired, -latex] (v1) to[bend left] (v0);
\draw[thick, bunired, -latex] (v1) to[bend left] (v2);
  		\draw[thick, bunired, -latex] (v2) to[bend left] (v1);

\draw[thick, bunired, -latex] (v2) to[bend left] (v4);
  		\draw[thick, bunired, -latex] (v4) to[bend left] (v2);
\draw[thick, bunired, -latex] (v5) to[bend left] (v4);
  		\draw[thick, bunired, -latex] (v4) to[bend left] (v5);
\end{tikzpicture}
	\caption{The bidirectional linear graph $\tL_n$.}
	\label{fig:nbidlin}
\end{figure}

For $n\geq 0$, let $\tL_n$ be the bidirectional linear digraph with $2n$ edges and $(n+1)$-vertices, as depicted in Figure~\ref{fig:nbidlin}. In particular, the graph $\tL_0$ is a single vertex with no edges. Using Lemma~\ref{lem:BjornerLemma}, we can determine the homotopy type of $X(\tL_n)$:

\begin{prop}\label{prop:connectivitylin}
    The  multipath complex $X(\tL_n)$ is homotopy equivalent to the sphere $S^{\lfloor \frac{n - 1}{2} \rfloor}$.
\end{prop}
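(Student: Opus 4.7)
The plan is to argue by induction on $n$, establishing the suspension recurrence
\[ X(\tL_n) \simeq \Sigma X(\tL_{n-2}) \qquad (n\geq 2) \]
via Lemma~\ref{lem:BjornerLemma}, and treating the base cases $n=0,1$ directly.  For $n=0$ the multipath complex $X(\tL_0)$ is empty (the single vertex carries no edges), matching $S^{-1}$; for $n=1$ the complex $X(\tL_1)$ consists of two disjoint $0$-simplices corresponding to the two edges between $v_0$ and $v_1$, hence is $S^0$. Both agree with $S^{\lfloor (n-1)/2\rfloor}$.

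For the inductive step, let $a_1$ and $b_1$ denote the vertices of $X(\tL_n)$ corresponding to the edges $v_0\to v_1$ and $v_1\to v_0$. The closed stars $\st(a_1)$ and $\st(b_1)$ are cones over their respective links, so each is contractible. The two things to verify are: (a) $\st(a_1)\cup\st(b_1)=X(\tL_n)$, and (b) $\st(a_1)\cap\st(b_1)\cong X(\tL_{n-2})$ as simplicial complexes.

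For (a), I would argue that any multipath $M$ avoiding both $a_1$ and $b_1$ has $v_0$ isolated, and then $M\cup\{a_1\}$ is a multipath unless $M$ contains the edge $v_2\to v_1$ (which would create two incoming edges at $v_1$), and symmetrically $M\cup\{b_1\}$ is a multipath unless $M$ contains $v_1\to v_2$; since these two edges form a directed $2$-cycle they cannot simultaneously lie in $M$, so $M$ belongs to at least one closed star. For (b), I would check that a face $\sigma$ extends by both $a_1$ and $b_1$ if and only if $\sigma$ contains none of the four edges among $v_0,v_1,v_2$; such $\sigma$ are precisely the multipaths of the subgraph spanned by $v_2,\dots,v_n$, which is a copy of $\tL_{n-2}$.

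Applying Lemma~\ref{lem:BjornerLemma} with $X_0=\st(a_1)$ and $X_1=\st(b_1)$ then gives $X(\tL_n)\simeq \Sigma(\st(a_1)\cap\st(b_1))\simeq \Sigma X(\tL_{n-2})$; combined with the inductive hypothesis and the identity $\Sigma S^{\lfloor(n-3)/2\rfloor}=S^{\lfloor(n-1)/2\rfloor}$, this finishes the proof. I expect the main effort to lie in step~(b): one must rule out every configuration other than the four edges incident to $v_0$ and $v_1$ as obstructions to extending by both $a_1$ and $b_1$, and this combinatorial bookkeeping should constitute the bulk of the argument; once the intersection is identified with $X(\tL_{n-2})$, the rest is formal.
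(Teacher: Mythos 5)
Your proposal is correct and follows essentially the same route as the paper: cover $X(\tL_n)$ by the two contractible cones (closed stars) on the edges $(v_0,v_1)$ and $(v_1,v_0)$, identify their intersection with $X(\tL_{n-2})$, apply Lemma~\ref{lem:BjornerLemma} to get $X(\tL_n)\simeq\Sigma X(\tL_{n-2})$, and induct. The only cosmetic difference is at the bottom of the induction: the paper verifies $X(\tL_1)\simeq X(\tL_2)\simeq S^0$ directly, whereas you start from $n=0,1$ and implicitly use the convention $\Sigma\emptyset=S^0$ at $n=2$, which is harmless since $X(\tL_2)\simeq S^0$ is immediate to check.
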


\begin{proof}
Consider $X_0, X_1 \subseteq X$ defined as follows:
\[ X_{0} \coloneqq \{ \text{ all multipaths containing or composable with }(v_0,v_1) \} \]
and
\[ X_{1} \coloneqq \{ \text{ all multipaths containing or composable with }(v_1,v_0) \}. \]
Clearly $X = X_{0} \cup X_1$. The complexes $X_0$ and $ X_1 $ are contractible, as they are cones.  Moreover, $X_0 \cap X_1 \cong X(\tL_{n-2})$.
It follows from Lemma~\ref{lem:BjornerLemma} that $X(\tL_n)$ is homotopy equivalent to the suspension $  \Sigma X(\tL_{n-2})$.
The multipath complex of $\tL_0$ is empty, and it can be easily checked that $X(\tL_{1})  \simeq X(\tL_{2})  \simeq S^0$. The statement follows. 
\end{proof}

\begin{figure}[h]
\begin{tikzpicture}[baseline=(current bounding box.center), scale=0.7]
		\tikzstyle{point}=[circle,thick,draw=black,fill=black,inner sep=0pt,minimum width=2pt,minimum height=2pt]
		\tikzstyle{arc}=[shorten >= 8pt,shorten <= 8pt,->, thick]
		
		\node[left] at (0,0) {$v_0$};
		\node (v0) at (0,0) {};
		\draw[fill] (0,0)  circle (.05);
		\node (v1) at (1.5,0) {};
		\draw[fill] (1.5,0)  circle (.05);
		\node (v2) at (3,0) {\dots};
		\node (v4) at (4.5,0) {};
		\draw[fill] (4.5,0)  circle (.05);
		\node (v5) at (6,0) {};
  \node[above] at (6.2,0) {$v_n$};
		\draw[fill] (6,0)  circle (.05);
  \node[right] (v6) at (7.5,0) {$v_{n+1}$};
		\draw[fill] (7.5,0)  circle (.05);
		
		\draw[thick, bunired, -latex] (v0) to (v1);
  		%\draw[thick, bunired, -latex] (v1) to[bend left] (v0);
\draw[thick, bunired, -latex] (v1) to[bend left] (v2);
  		\draw[thick, bunired, -latex] (v2) to[bend left] (v1);

\draw[thick, bunired, -latex] (v2) to[bend left] (v4);
  		\draw[thick, bunired, -latex] (v4) to[bend left] (v2);
\draw[thick, bunired, -latex] (v5) to[bend left] (v4);
  		\draw[thick, bunired, -latex] (v4) to[bend left] (v5);
    \draw[thick, bunired, -latex] (v5) to (7.4,0);
\end{tikzpicture}
\hfill
\begin{tikzpicture}[baseline=(current bounding box.center), scale=0.7]
		\tikzstyle{point}=[circle,thick,draw=black,fill=black,inner sep=0pt,minimum width=2pt,minimum height=2pt]
		\tikzstyle{arc}=[shorten >= 8pt,shorten <= 8pt,->, thick]
		
		\node[left] at (0,0) {$v_0$};
		\node (v0) at (0,0) {};
		\draw[fill] (0,0)  circle (.05);
		\node (v1) at (1.5,0) {};
		\draw[fill] (1.5,0)  circle (.05);
		\node (v2) at (3,0) {\dots};
		\node (v4) at (4.5,0) {};
		\draw[fill] (4.5,0)  circle (.05);
		\node (v5) at (6,0) {};
  \node[above] at (6.2,0) {$v_n$};
		\draw[fill] (6,0)  circle (.05);
  \node[right] (v6) at (7.5,0) {$v_{n+1}$};
		\draw[fill] (7.5,0)  circle (.05);
		
		\draw[thick, bunired, -latex] (v0) to[bend left] (v1);
  		\draw[thick, bunired, -latex] (v1) to[bend left] (v0);
\draw[thick, bunired, -latex] (v1) to[bend left] (v2);
  		\draw[thick, bunired, -latex] (v2) to[bend left] (v1);

\draw[thick, bunired, -latex] (v2) to[bend left] (v4);
  		\draw[thick, bunired, -latex] (v4) to[bend left] (v2);
\draw[thick, bunired, -latex] (v5) to[bend left] (v4);
  		\draw[thick, bunired, -latex] (v4) to[bend left] (v5);
    \draw[thick, bunired, -latex] (v5) to (7.4,0);
\end{tikzpicture}

	\caption{The  linear graph $\tW_n$ (on the left) and the graph $\widehat{\tL_n}$ (on the right).}
	\label{fig:nbidlinplusone}
\end{figure}
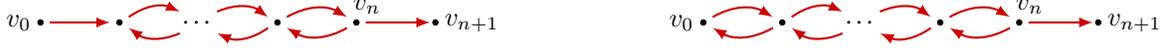

Inspection of the proof of Proposition~\ref{prop:connectivitylin} provides an explicit sphere~$S^{\lfloor \frac{n - 1}{2} \rfloor}$ generating the homology groups of $X(\tL_n)$.
In the notation used in the proof of Proposition~\ref{prop:connectivitylin}, note that $X_0$ is homotopy equivalent to a cone of simplicial complexes, \emph{i.e.}~it is the cone~$(v_0,v_1)\ast X(\tL_{n-2})$ with apex $(v_0,v_1)$, where we have identified $\tL_{n-2}$ with the full subgraph of $\tL_n$ on the vertices $v_2,\dots,v_n$. The homotopy equivalence can be obtained via a deformation retraction which fixes $X_0\cap X_1$. Analogously, $X_1$ is also homotopy equivalent to the cone $(v_1,v_0)\ast X(\tL_{n-2})$ via a deformation which fixes $X_0\cap X_1$. Therefore, the multipath complex $X(\tL_n)$ is obtained, up to homotopy,  by taking the suspension of $X(\tL_{n-2})=X_0\cap X_1$ with suspension points corresponding to $(v_0,v_1)$ and $(v_1,v_0)$.
In view of these identifications, we get that the multipath complex~$X(\tL_n)$ is homotopy equivalent to the join:
\begin{equation}
    \label{eq:almostbid}
    X'_n = \begin{cases} \left\lbrace(v_0,v_1) , (v_1,v_0)\right\rbrace \ast \left\lbrace (v_2,v_3), (v_3,v_2) \right\rbrace \ast \cdots \ast \left\lbrace (v_{n-1},v_n) , (v_n,v_{n-1})\right\rbrace & n \equiv 1 \mod 2 \\ \\ \left\lbrace  (v_0,v_1) , (v_1,v_0)\right\rbrace \ast \left\lbrace  (v_2,v_3) , (v_3,v_2) \right\rbrace \ast \cdots \ast \left\lbrace  (v_{n-2},v_{n-1}) , (v_{n-1},v_{n-2})\right\rbrace & n \equiv 0 \mod 2 \end{cases}.\
\end{equation}
In either case the triples $(X(\tL_n); X_0, X_1)$ and $(X'_n; (v_0,v_1)\ast X'_{n-1}, (v_1,v_0)\ast X'_{n-1})$ are Mayer-Vietoris triads in the sense of \cite[Page~240]{Brown68}.
By induction one shows that the natural inclusions induce a map of triads 
\[(X'_n; (v_0,v_1)\ast X'_{n-1}, (v_1,v_0)\ast X'_{n-1})\longrightarrow (X(\tL_n); X_0, X_1)\ .\]
Thus, the inclusion $X'\hookrightarrow X(\tL_n)$ is a homotopy equivalence by  \cite[7.4.1]{Brown68}. Similarly, one can show that $X(\tL_{4k})$ is homotopy equivalent to
\begin{equation}
    \label{eq:almostbid2}
    X''_{2k} = \left\lbrace(v_0,v_1) , (v_2,v_1)\right\rbrace \ast \left\lbrace (v_3,v_2), (v_3,v_4) \right\rbrace \ast \cdots \ast \left\lbrace (v_{4k-1},v_{4k - 2}) , (v_{4k-1},v_{4k})\right\rbrace
\end{equation}
with inclusion $X''_{2k}\hookrightarrow X(\tL_{2k})$ giving a homotopy equivalence.

 Lemma~\ref{lem:BjornerLemma} can be of help in a variety of cases, as we shall explain. Consider the case of graphs $\tG'$ as illustrated in Figure~\ref{fig:gluingt}. Assume that in $\tG'$ there is a vertex $v$ of valence 3 without selfloops. Assume also that the indegree or outdegree of $v$ is $1$. Let $\tW_n$ be the graph obtained from $\tL_n$ by deleting the edge $(v_1,v_0)$ and adding the edge~$(v_n,v_{n+1})$, as depicted in Figure~\ref{fig:nbidlinplusone}. Then, we can glue a copy of $\tW_2$ to $\tG'$,   by identifying $v$ with either $v_0$ or $v_{n+1}$ and $t$ with either $v_{1}$ or $v_{n}$, depending on the orientation of the edge between $t$ and $v$. The result of such an operation is shown in Figure~\ref{fig:gluingt}. We say that the new graph $\tG''$ obtained by gluing of $\tW_2$ on $\tG'$ as described has been obtained from $\tG'$ by a \emph{T-operation}.

\begin{prop}\label{prop:toper}
    Let $\tG''$ be a graph obtained from $\tG'$ by a T-operation. Then, we have 
    $
    X(\tG'')\simeq \Sigma X(\tG')$.
    {Furthermore, if we identify $\Sigma X(\tG')$ with the subcomplex of $
    X(\tG'')$ given by $\{(t_2,t_1), (t, t_1)\}\ast X(\tG')$, then the natural inclusion map~$\Sigma X(\tG') \hookrightarrow X(\tG'')$ is a homotopy equivalence.}
\end{prop}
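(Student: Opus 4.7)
My approach is to mimic the proof of Proposition~\ref{prop:connectivitylin}: apply Lemma~\ref{lem:BjornerLemma} (with $n=1$) to $X(\tG'')$, using as the two pieces the closed stars
\[
X_0 = \overline{\st}\bigl((t_2,t_1)\bigr), \qquad X_1 = \overline{\st}\bigl((t,t_1)\bigr).
\]
Each is a cone on its apex edge and therefore contractible, so to conclude $X(\tG'')\simeq \Sigma(X_0\cap X_1)$ it suffices to verify the covering condition $X_0\cup X_1 = X(\tG'')$ and to identify the intersection. The covering is immediate from the structure of the attached $\tW_2$-tail: the vertex $t_1$ receives edges only from $t_2$ and $t$, so a multipath $P$ of $\tG''$ either contains $(t,t_1)$ (and hence lies in $X_1$) or it does not, in which case $(t_2,t_1)$ can always be adjoined to $P$ (as $t_2$ is incident only to $(t_2,t_1)$), placing $P$ in $X_0$.

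The main technical step is to show that $X_0\cap X_1 \cong X(\tG')$. A multipath $P$ lies in both closed stars iff $P\cup\{(t_2,t_1)\}$ and $P\cup\{(t,t_1)\}$ are both multipaths. The first joinability condition forces $(t,t_1)\notin P$ (otherwise $t_1$ would have indegree~$2$); the second additionally forces $(t_1,t)\notin P$ (to avoid the $2$-cycle $t\leftrightarrows t_1$) and $(t,v)\notin P$ (otherwise $t$ would have outdegree~$2$ after adjoining $(t,t_1)$), where $(t,v)$ denotes the edge of the $\tW_2$-tail connecting it to $\tG'$. Together these conditions exclude every edge of $\tG''$ incident to the newly added vertices $t,t_1,t_2$; what remains is exactly a multipath of $\tG'$, and conversely any such multipath trivially satisfies both joinability conditions. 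Lemma~\ref{lem:BjornerLemma} then yields $X(\tG'')\simeq \Sigma X(\tG')$.

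For the ``furthermore'' statement, set $Y_0 = \{(t_2,t_1)\}\ast X(\tG')$ and $Y_1 = \{(t,t_1)\}\ast X(\tG')$, so that $Y = Y_0\cup Y_1 = \{(t_2,t_1),(t,t_1)\}\ast X(\tG')$ is the standard suspension model identified with $\Sigma X(\tG')$. Since the new vertices $t,t_1,t_2$ are disjoint from $V(\tG')$, both $Y_0\subseteq X_0$ and $Y_1\subseteq X_1$ hold, and one checks $Y_0\cap Y_1 = X(\tG') = X_0\cap X_1$ under the identification above. The inclusion $Y\hookrightarrow X(\tG'')$ is therefore a map of Mayer--Vietoris triads in the sense of \cite[p.~240]{Brown68}, and each of the three induced maps on pieces is a homotopy equivalence --- between contractible cones for $Y_i\hookrightarrow X_i$, and the identity for the intersection --- so by \cite[7.4.1]{Brown68} the total inclusion is a homotopy equivalence, in close analogy with the proof of Proposition~\ref{prop:connectivitylin}.

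The main obstacle I anticipate is the combinatorial identification of $X_0\cap X_1$ with $X(\tG')$: one must systematically exploit the degree constraints at $t_1$ and $t$ together with the no-cycle condition on the bigon $\{(t,t_1),(t_1,t)\}$ to eliminate every edge touching the three new vertices. The parallel outdegree-$1$ case is handled symmetrically by reversing orientations throughout, which permutes the roles of the three new edges.
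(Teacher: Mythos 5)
Your proposal is correct and takes essentially the same route as the paper: the same cover of $X(\tG'')$ by the closed stars of $(t_2,t_1)$ and $(t,t_1)$, Lemma~\ref{lem:BjornerLemma} for the suspension, and the map of Mayer--Vietoris triads together with \cite[7.4.1]{Brown68} for the ``furthermore'' part, with you additionally spelling out the covering and the identification $X_0\cap X_1\cong X(\tG')$ that the paper only asserts. The lone imprecision is harmless: your explicit exclusion list omits $(t_2,t_1)\notin P$, but this follows from the same indegree-at-$t_1$ argument applied to composability with $(t,t_1)$.
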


\begin{proof}
    We assume that $v$, the vertex of valence 3, has indegree $1$ -- the other case can be proven similarly. We use the same notation illustrated in Figure~\ref{fig:gluingt}. Let $X_0$ be the subcomplex of $X(\tG'')$ given by all multipaths containing or composable with $(t_2,t_1)$. Analogously, let $X_1$ be the subcomplex of $X(\tG'')$ given by all multipaths containing or composable with $(t,t_1)$. Then, they cover $X(\tG'')$ and their intersection $X_0\cap X_1$ can be identified with $X(\tG')$. The first part of the statement follows from Lemma~\ref{lem:BjornerLemma}.
    {The last part of the statement follows from  \cite[7.4.1]{Brown68} since $(X(\tG'' ), X_0, X_1)$ and $(X(\tG'), (t_2,t_1)\ast X(\tG'),  (t, t_1) \ast X(\tG'))$ are Mayer-Vietoris triples, and the inclusions give a map of triples.}
\end{proof}

{Note that, in the proof of Proposition~\ref{prop:toper} we have never used the fact that $v$ is of valence 3,  and we have never used the structure of $\tG$. In fact, the proposition can be generalised to graphs $\tG'$ in which there exists a vertex $t$ which has either indegree $0$ or outdegree $0$. Let $e_1,\dots, e_k$ be the edges in $\tG'$ incident to $t$, and call $u_0, \dots, u_s$  the set of vertices incident to $e_1,\dots, e_k$. Then, let $\tG''$ be obtained from $\tG'$ by gluing a copy of $\tW_2$ (on vertices $v_0,v_1,v_2,v_3$). If $t$ has indegree 0, we identify $t$ and $v_1$, and $v_0$ with $u_j$ for some $j$; otherwise, we identify $t$ with $v_2$, and $v_3$ with  $u_j$ for some~$j$. Then, with the same arguments of Proposition~\ref{prop:toper}, we get $X(\tG'')\simeq \Sigma X(\tG')$.}

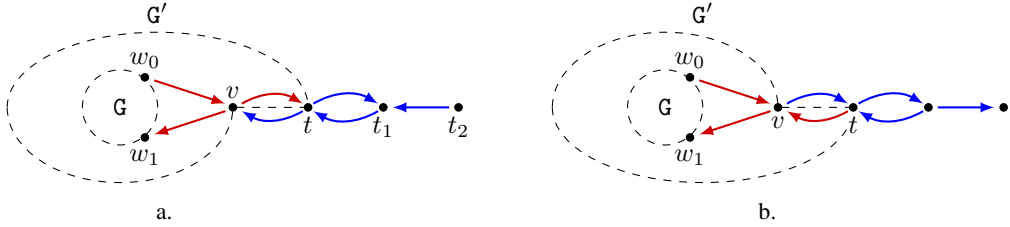
\begin{figure}[h]
	\centering
	\begin{subfigure}[b]{0.3\textwidth}
		\centering
\begin{tikzpicture}[baseline=(current bounding box.center)]
		\tikzstyle{point}=[circle,thick,draw=black,fill=black,inner sep=0pt,minimum width=2pt,minimum height=2pt]
		\tikzstyle{arc}=[shorten >= 8pt,shorten <= 8pt,->, thick]
        \node[above] at (-.5,1) {$\tt{G'}$};
        \node at (-1,0) {$\tt{G}$};
		\draw[dashed] (-1,0)  circle (.5);
		%\draw[dashed] (-1,0)  circle (2.5 and 1);
  \draw[dashed] (-2.5,0)  arc (180:0:2 and 1) -- (.5,0)  arc (0:-180:1.5 and 1);
		\node[above] (v0) at (0.5,0) {$v$};
		\draw[fill] (0.5,0)  circle (.05);

\node[above]  at (-0.67,0.4) {$w_0$};
\node (w0) at (-0.67,0.4) {};
		\draw[fill] (-0.67,0.4)  circle (.05);
  \node[below]  at (-0.67,-0.44) {$w_1$};
  \node (w1) at (-0.67,-0.4) {};
		\draw[fill] (-0.67,-0.4)  circle (.05);
  
		\node[fill, white, below] at (1.5,0) {}  circle (.25);
		\node[below] (v1) at (1.5,0) {$t$};
		\draw[fill] (1.5,0)  circle (.05);
		\node (v2) at (2.5,0) {};
		\node[below] at (3.5,0) {$t_{2}$};
		\node[below] at (2.5,0) {$t_{1}$};
		\draw[fill] (2.5,0)  circle (.05);
		\node (v3) at (3.5,0) {};
		\draw[fill] (3.5,0)  circle (.05);

\draw[thick, bunired, -latex] (w0) -- (0.4,0.05);
\draw[thick, bunired, latex-] (w1) -- (0.4,-0.05);
  
		%\draw[thick, bunired] (-0.35,0) -- (-0.15,0);
		\draw[thick, bunired, latex-] (1.43,0.05) to[bend right] (0.62,0.05);
  \draw[thick, blue, -latex] (1.43,-0.05) to[bend left] (0.62,-0.05);

  \draw[thick, blue, latex-] (1.57,-0.05) to[bend right] (2.43,-0.05);
  \draw[thick, blue, -latex] (1.57,0.05) to[bend left] (2.43,0.05);
  \draw[thick, blue, latex-] (v2) -- (v3);
  
		%\draw[thick, bunired, -latex] (1.65,0.05) -- (2.85,0.95);
		%\draw[thick, bunired, -latex] (1.65,-0.05) -- (2.85,-0.95);
	\end{tikzpicture}
		\caption{\phantom{  A   }}
	\end{subfigure}
	\hspace{0.1\textwidth}
	\begin{subfigure}[b]{0.5\textwidth}
	\centering
\begin{tikzpicture}[baseline=(current bounding box.center)]
		\tikzstyle{point}=[circle,thick,draw=black,fill=black,inner sep=0pt,minimum width=2pt,minimum height=2pt]
		\tikzstyle{arc}=[shorten >= 8pt,shorten <= 8pt,->, thick]
        \node[above] at (-.5,1) {$\tt{G'}$};
        \node at (-1,0) {$\tt{G}$};
		\draw[dashed] (-1,0)  circle (.5);
		
		\node[below] (v0) at (0.5,0) {$v$};
		\draw[fill] (0.5,0)  circle (.05);
  \draw[dashed] (-2.5,0)  arc (-180:0:2 and 1) -- (.5,0)  arc (0:180:1.5 and 1);
\node[above]  at (-0.67,0.4) {$w_0$};
\node (w0) at (-0.67,0.4) {};
		\draw[fill] (-0.67,0.4)  circle (.05);
  \node[below]  at (-0.67,-0.44) {$w_1$};
  \node (w1) at (-0.67,-0.4) {};
		\draw[fill] (-0.67,-0.4)  circle (.05);
  
		\node[fill, white, below] at (1.5,0) {}  circle (.25);
		\node[below] (v1) at (1.5,0) {$t$};
		\draw[fill] (1.5,0)  circle (.05);
		\node (v2) at (2.5,0) {};
		%\node[above] at (2.25,.55) {$e_{1}$};
		%\node[below] at (2.25,-.55) {$e_{2}$};
		\draw[fill] (2.5,0)  circle (.05);
		\node (v3) at (3.5,0) {};
		\draw[fill] (3.5,0)  circle (.05);

\draw[thick, bunired, -latex] (w0) -- (0.4,0.05);
\draw[thick, bunired, latex-] (w1) -- (0.4,-0.05);
  
		%\draw[thick, bunired] (-0.35,0) -- (-0.15,0);
		\draw[thick, blue, latex-] (1.43,0.05) to[bend right] (0.62,0.05);
  \draw[thick, bunired, -latex] (1.43,-0.05) to[bend left] (0.62,-0.05);

  \draw[thick, blue, latex-] (1.57,-0.05) to[bend right] (2.43,-0.05);
  \draw[thick, blue, -latex] (1.57,0.05) to[bend left] (2.43,0.05);
  \draw[thick, blue, -latex] (v2) -- (v3);
  
		%\draw[thick, bunired, -latex] (1.65,0.05) -- (2.85,0.95);
		%\draw[thick, bunired, -latex] (1.65,-0.05) -- (2.85,-0.95);
	\end{tikzpicture}
				\caption{ \phantom{C }}
\end{subfigure}
\caption{The definition of $T$-operation, which is a glueing of a copy of $\tW_2$ (in blue) to a graph $\tG'$.}
\label{fig:gluingt}
\end{figure}

Denote by $\widehat{\tL_n}$ the graph obtained from $\tL_n$ by adding an extra edge $(v_n,v_{n+1})$, as depicted in Figure~\ref{fig:nbidlinplusone}. In the light of Proposition~\ref{prop:toper}, we can now compute the homotopy type of the multipath complex $X(\widehat{\tL_n})$;

\begin{cor}\label{cor:BLnhat}
For each $n$ we have a homotopy equivalence
\[ X(\widehat{\tL_n}) \simeq \begin{cases} \ast & \text{if } n \text{ is even} \\ S^{ \frac{n - 1}{2}} &\text{if } n \text{ is odd} \end{cases}.\]
\end{cor}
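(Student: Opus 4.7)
The plan is to apply the generalisation of Proposition~\ref{prop:toper} (discussed in the remark following it) to obtain a suspension recursion, and then induct on~$n$.

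The key preliminary observation is that multipath complexes are invariant under reversing all edge orientations: the map $e_1,\dots,e_k\mapsto e_k^{\mathrm{op}},\dots,e_1^{\mathrm{op}}$ gives a length-preserving bijection between simple paths in $\tG$ and in $\tG^{\mathrm{op}}$, and hence between multipaths, so $X(\tG)\cong X(\tG^{\mathrm{op}})$.

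I would now apply the generalised T-operation to $\widehat{\tL_{n-2}}$, which has the vertex $v_{n-1}$ of outdegree $0$ whose only neighbour is $v_{n-2}$ (via the edge $(v_{n-2},v_{n-1})$). Taking $t=v_{n-1}$ and $u_0=v_{n-2}$, and unfolding the edges of $\tW_2$ that are glued on, a direct check shows that the resulting digraph is isomorphic to the opposite digraph $\widehat{\tL_n}^{\mathrm{op}}$: the edge $(v_{n-2},v_{n-1})$ becomes bidirectional, the two new vertices are attached to $v_{n-1}$ by a bidirectional edge and to each other by a pendant edge pointing in the opposite direction to the one in $\widehat{\tL_n}$. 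Combining the suspension formula from the generalisation of Proposition~\ref{prop:toper} with the above orientation-reversal symmetry yields
\[X(\widehat{\tL_n})\cong X(\widehat{\tL_n}^{\mathrm{op}})\simeq \Sigma X(\widehat{\tL_{n-2}})\]
for every $n\geq 2$.

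Finally, I would settle the base cases $n=0,1$ directly. The graph $\widehat{\tL_0}$ is a single directed edge $v_0\to v_1$, whose multipath complex is one point, hence contractible. The graph $\widehat{\tL_1}$ has three $0$-simplices $(v_0,v_1),(v_1,v_0),(v_1,v_2)$ and the single $1$-simplex $\{(v_0,v_1),(v_1,v_2)\}$, which deformation retracts onto two points and is homotopy equivalent to $S^0$. The suspension recursion above then gives $X(\widehat{\tL_{2k}})\simeq \mathrm{pt}$ (since suspensions of contractible spaces remain contractible) and $X(\widehat{\tL_{2k+1}})\simeq \Sigma^k S^0\simeq S^k$, matching the stated formula. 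The main subtlety, which I expect to be routine but requires care, is the combinatorial bookkeeping of the glued copy of $\tW_2$ to confirm that it indeed produces $\widehat{\tL_n}^{\mathrm{op}}$ rather than some other digraph; the orientation-reversal symmetry of multipath complexes then bridges this discrepancy so that Proposition~\ref{prop:toper} yields exactly the desired suspension.
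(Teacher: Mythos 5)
Your argument is correct and is essentially the paper's own proof: the paper likewise checks the base cases $n=0,1$ and deduces $X(\widehat{\tL_k})\simeq\Sigma X(-\widehat{\tL_{k-2}})\cong \Sigma X(\widehat{\tL_{k-2}})$ from Proposition~\ref{prop:toper} together with the invariance of multipath complexes under reversing all edge orientations, then inducts. The only cosmetic difference is that you apply the (generalised) T-operation to $\widehat{\tL_{n-2}}$ and identify the output with the reversed graph $\widehat{\tL_n}^{\mathrm{op}}$, whereas the paper realises $\widehat{\tL_k}$ itself as a T-operation on the reversed graph $-\widehat{\tL_{k-2}}$.
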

\begin{proof} By direct computation, it is easy to verify that the statement is true for $n = 0,1$. 
Assume the statement is true for all $n< k$, and denote by $-\widehat{\tL_{n}}$ the digraph obtained by reversing the orientation of all edges in $\widehat{\tL_{n}}$. Directly from Proposition~\ref{prop:toper}, we have the homotopy equivalence $X(\widehat{\tL_k}) \simeq \Sigma X(-\widehat\tL_{k-2})$, which in turn is isomorphic to~$\Sigma X(\widehat\tL_{k-2})$. The statement now follows by induction. 
\end{proof}

Let $\tt{i}_n\colon \tW_n\hookrightarrow \widehat{\tL_n} $ be the obvious inclusion. Taking multipath complexes is functorial with respect to  morphisms of digraph~\cite[Remark~6.3]{zbMATH07680365}, hence we get an induced map $\iota_n\colon X(\tW_n)\to  X(\widehat{\tL_n})$.

\begin{lem}\label{lem:hequivbln}
    For $n\equiv 0,3 \mod 4$, the multipath complexes $ X(\tW_n)$ and $  X(\widehat{\tL_n})$ are homotopy equivalent.
\end{lem}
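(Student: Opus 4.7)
The plan is to establish a period-$4$ suspension recursion
\[
X(\tW_n)\simeq \Sigma^{2}\,X(\tW_{n-4})\qquad \text{for every } n\ge 4,
\]
and to compare the resulting homotopy types with those of $X(\widehat{\tL_n})$ given in Corollary~\ref{cor:BLnhat}. The recursion will be obtained by applying Lemma~\ref{lem:BjornerLemma} twice: once to $X(\tW_n)$ itself, and once to the multipath complex of an auxiliary graph that arises as the intersection.

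For the first suspension, observe that $v_1$ has two incoming edges in $\tW_n$, namely $(v_0,v_1)$ and $(v_2,v_1)$, and these are mutually exclusive in any multipath. Let $A$ and $B$ be the closed stars of $(v_0,v_1)$ and $(v_2,v_1)$ in $X(\tW_n)$; each is a cone, hence contractible. A short verification using the degree and no-cycle conditions of Definition~\ref{def:multipaths} shows that every multipath is composable with at least one of these two edges, so that $A\cup B=X(\tW_n)$. The intersection $A\cap B$ consists of the multipaths avoiding all four edges $(v_0,v_1), (v_2,v_1), (v_1,v_2), (v_2,v_3)$: double composability forces $v_1$ to be isolated and $v_2$ to have no outgoing edge. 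The resulting effective digraph on the remaining vertices $v_2,\dots,v_{n+1}$ is
\[
v_2\leftarrow v_3\leftrightarrow v_4\leftrightarrow \cdots \leftrightarrow v_n\to v_{n+1},
\]
which I denote by $\tW^{\ast}_{n-2}$; it has the same shape as $\tW_{n-2}$ except that the leftmost directed tail is reversed. Lemma~\ref{lem:BjornerLemma} thus yields $X(\tW_n)\simeq \Sigma X(\tW^{\ast}_{n-2})$.

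The second suspension comes from the symmetric argument applied to $\tW^{\ast}_m$: here $w_1$ has two outgoing edges $(w_1,w_0)$ and $(w_1,w_2)$, and covering $X(\tW^{\ast}_m)$ by their closed stars produces two contractible subcomplexes whose union is the whole complex and whose intersection consists of multipaths avoiding $(w_1,w_0), (w_1,w_2), (w_2,w_1), (w_3,w_2)$. The corresponding effective digraph is $w_2\to w_3\leftrightarrow \cdots \leftrightarrow w_m\to w_{m+1}$, i.e., a copy of $\tW_{m-2}$, so a second application of Lemma~\ref{lem:BjornerLemma} yields $X(\tW^{\ast}_m)\simeq \Sigma X(\tW_{m-2})$. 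Composing the two steps produces the advertised two-step recursion.

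The base cases are $X(\tW_0)\simeq \ast$ (a single $0$-simplex) and $X(\tW_3)\simeq \Sigma X(\tW^{\ast}_1)\simeq \Sigma S^0\simeq S^1$, where a direct inspection shows $X(\tW^{\ast}_1)\simeq S^0$ because $\tW^{\ast}_1$ consists of two edges sharing a common source that cannot coexist in a multipath. Iterating the recursion then gives $X(\tW_{4k})\simeq \ast$ and $X(\tW_{4k+3})\simeq S^{2k+1}=S^{(n-1)/2}$, which match $X(\widehat{\tL_{4k}})\simeq \ast$ and $X(\widehat{\tL_{4k+3}})\simeq S^{(n-1)/2}$ respectively by Corollary~\ref{cor:BLnhat}. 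The main obstacle is the bookkeeping in the two intersection computations: one has to track the degree condition, the no-cycle condition, and the identification of the resulting forbidden-edge set with the isolation of two vertices plus the removal of one additional edge. It is precisely the asymmetry between the ``in-tails'' of $\tW$ and the ``out-tails'' of $\tW^{\ast}$ that forces the period to be $4$ rather than $2$, and hence explains why the homotopy equivalence holds only for $n\equiv 0,3\pmod{4}$.
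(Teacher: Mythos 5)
Your argument is correct: the two closed-star covers are genuine covers (the key points being that $v_0$ has a unique outgoing edge and indegree $0$ in $\tW_n$, and dually for the pendant source in the reversed-tail graph), the four forbidden edges in each intersection are exactly right (including the no-cycle conditions, which rule out $(v_1,v_2)$ resp.\ $(w_2,w_1)$), and Lemma~\ref{lem:BjornerLemma} with a two-element cover gives the two suspension steps, so $X(\tW_n)\simeq\Sigma^2 X(\tW_{n-4})$ and the homotopy types match those of Corollary~\ref{cor:BLnhat}. Your route is, however, different from the paper's. The paper does not compute $X(\tW_n)$ directly: it checks the base case $n=3$ (both complexes are $S^1$, via the inclusion $\tW_3\hookrightarrow\widehat{\tL_3}$), observes that for $n=4k+3$ (resp.\ $n\equiv 0$) both $\tW_n$ and $\widehat{\tL_n}$ are obtained from the base graphs by iterating the T-operation $2k$ times, and invokes Proposition~\ref{prop:toper} to identify both complexes with $2k$-fold suspensions of homotopy equivalent bases. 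Your two-step recursion through the reversed-tail graph is essentially the same mechanism as a pair of T-operations (each step is a Björner-lemma suspension with a two-cone cover), but it is carried out only on the $\tW$ side and then compared numerically with Corollary~\ref{cor:BLnhat}; as a byproduct you re-prove the $n\equiv 0,3$ cases of Proposition~\ref{prop:wn} without citing it, so there is no circularity. What the paper's route buys in exchange is control of the map: since Proposition~\ref{prop:toper} realizes each suspension by a natural inclusion, the paper's proof keeps track of the inclusion-induced equivalence $\iota_n$ (explicitly for $n=3$), which is the form in which the result is later exploited in Theorem~\ref{thm:connectivity_homology}; your proof yields only an abstract homotopy equivalence, which suffices for the lemma as stated. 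Two cosmetic points: $\tW_0$ is not formally defined in the paper (the definition of $\tW_n$ assumes $n\geq 1$), so it is cleaner to say that the bottom intersection is the multipath complex of a single directed edge, hence a point; and the covering claim should read ``contains or is composable with'' rather than just ``is composable with''.
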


\begin{proof}
    First, assume $n=3$. Then, it is easy to see that $X(\tW_3)$ and $X(\widehat{\tL_3})$ are homotopy equivalent to $S^1$. Furthermore, such homotopy equivalence is induced by the inclusion of $\tW_3$ in $\widehat{\tL_3}$ (see Figure~\ref{fig:hatBL3 e W3}). 
        Assume $n = 4k+3$. Then, the graph $\tW_n$ is obtained from $\tW_3$ by iterating $2k$-times the $T$-operation. Analogously, $\widehat{\tL_n}$ is obtained from $\widehat{\tL_3}$ by iterating $2k$-times the $T$-operation.
As taking suspensions is commutative up to homotopy, the statement for $n\equiv 3\mod 4$ follows.    The case $n\equiv 0\mod 4$ is completely analogous.
\end{proof}

    \begin{figure}[h]
        \centering
        \begin{tikzpicture}[scale = 1.5]
            \draw[fill,blue] (0,0) circle (.025);
            \draw[fill, opacity =.3] (1,0) circle (.025);
            \draw[fill,blue] (0.5,0.866) circle (.025);

            \draw[fill, blue] (2,0) circle (.025);
            \draw[fill, blue] (3,0) circle (.025);
            \draw[fill, blue] (2.5,0.866) circle (.025);
            
            \draw[fill,blue] (2,0.75) circle (.025);
        
            \node[above left] (v21) at (0.5,0.866) {$(v_2,v_1)$};
            \node[below] (v23) at (2,0) {$(v_2,v_3)$};
            \node[right] (v12) at (3,0) {$(v_1,v_2)$};
            \node[below left] (v32) at (0,0) {$(v_3,v_2)$};
            \node[below] (v10) at (1,0) {$(v_1,v_0)$};
            \node[above] (v34) at (2,0.75) {$(v_3,v_4)$};
            \node[above right] (v01) at (2.5,0.866) {$(v_0,v_1)$};

            \draw[pattern = north east lines, pattern color = green,opacity = .3] (0,0) -- (0.5,0.866) -- (1,0) -- cycle;
            %\draw(0,0) -- (0.5,0.866) -- (1,0) -- cycle;
            
            \draw[pattern = north east lines, pattern color = green,opacity = .3] (2,0.75) -- (0.5,0.866) -- (1,0) -- cycle;
            %\draw  (2,0.75) -- (0.5,0.866) -- (1,0) -- cycle;

            \draw[pattern = north east lines, pattern color = green,opacity = .3] (2,0.75) -- (2,0) -- (1,0) -- cycle;
            %\draw  (2,0.75) -- (2,0) -- (1,0) -- cycle;
            
            \draw[blue, dashed] (2,0.75) -- (3,0);
            
            \draw[pattern = north east lines, pattern color = blue,opacity = .6] (2,0) -- (2.5,0.866) -- (3,0) -- cycle;
            \draw[blue] (2,0) -- (2.5,0.866) -- (3,0) -- cycle;

            \draw[pattern = north east lines, pattern color = blue,opacity = .5] (2,0.75) -- (2.5,0.866) -- (2,0) -- cycle;
            \draw[blue] (2,0.75) -- (2.5,0.866) -- (2,0) -- cycle;

            \draw[very thick, blue] (0,0) .. controls +(-.5,0) and +(-1,-1) .. (0,1.73) .. controls +(1,1) and +(0,1) .. (2.5,0.866) ;

            \draw[blue] (2,0.75) --   (2,0);
            \draw[blue] (2,0.75) --   (.5,.866);
            \draw[blue] (0,0) --   (.5,.866);

            \node at (1.5, -1) {(b)};
            
            \begin{scope}[shift ={+(-5,0)}]
                \draw[fill] (0,0) circle (.025);
            \draw[fill] (1,0) circle (.025);
            \draw[fill] (0.5,0.866) circle (.025);

            \draw[fill] (2,0) circle (.025);
            \draw[fill] (3,0) circle (.025);
            \draw[fill] (2.5,0.866) circle (.025);
            
            \draw[fill] (2,0.75) circle (.025);
        
            \node[above left] (v21) at (0.5,0.866) {$(v_2,v_1)$};
            \node[below] (v23) at (2,0) {$(v_2,v_3)$};
            \node[right] (v12) at (3,0) {$(v_1,v_2)$};
            \node[below left] (v32) at (0,0) {$(v_3,v_2)$};
            \node[below] (v10) at (1,0) {$(v_1,v_0)$};
            \node[above] (v34) at (2,0.75) {$(v_3,v_4)$};
            \node[above right] (v01) at (2.5,0.866) {$(v_0,v_1)$};
            \draw[dashed] (2,0.75) -- (3,0);
            
            \draw[pattern = north east lines, pattern color = green,opacity = .6] (2,0) -- (2.5,0.866) -- (3,0) -- cycle;
            \draw (2,0) -- (2.5,0.866) -- (3,0) -- cycle;
            
            \draw[pattern = north east lines, pattern color = green,opacity = .5] (0,0) -- (0.5,0.866) -- (1,0) -- cycle;
            \draw(0,0) -- (0.5,0.866) -- (1,0) -- cycle;
            
            \draw[pattern = north east lines, pattern color = green,opacity = .5] (2,0.75) -- (2.5,0.866) -- (2,0) -- cycle;
            \draw (2,0.75) -- (2.5,0.866) -- (2,0) -- cycle;
            
            \draw[pattern = north east lines, pattern color = green,opacity = .5] (2,0.75) -- (0.5,0.866) -- (1,0) -- cycle;
            \draw  (2,0.75) -- (0.5,0.866) -- (1,0) -- cycle;

            \draw[pattern = north east lines, pattern color = green,opacity = .5] (2,0.75) -- (2,0) -- (1,0) -- cycle;
            \draw  (2,0.75) -- (2,0) -- (1,0) -- cycle;

            \draw[very thick] (0,0) .. controls +(-.5,0) and +(-1,-1) .. (0,1.73) .. controls +(1,1) and +(0,1) .. (2.5,0.866) ;
            \node at (1.5, -1) {(a)};
            \end{scope}
        \end{tikzpicture}
        \caption{(a) The simplicial complex $X(\widehat{\tL}_3)$ and (b) the subcomplex $\iota_3(X(\tW_3))$ highlighted in blue. The vertices $(v_0,v_1)$,  $(v_1,v_2)$, $(v_2,v_3)$, and  $(v_3,v_4)$ span a simplex in both complexes.}
        \label{fig:hatBL3 e W3}
    \end{figure}
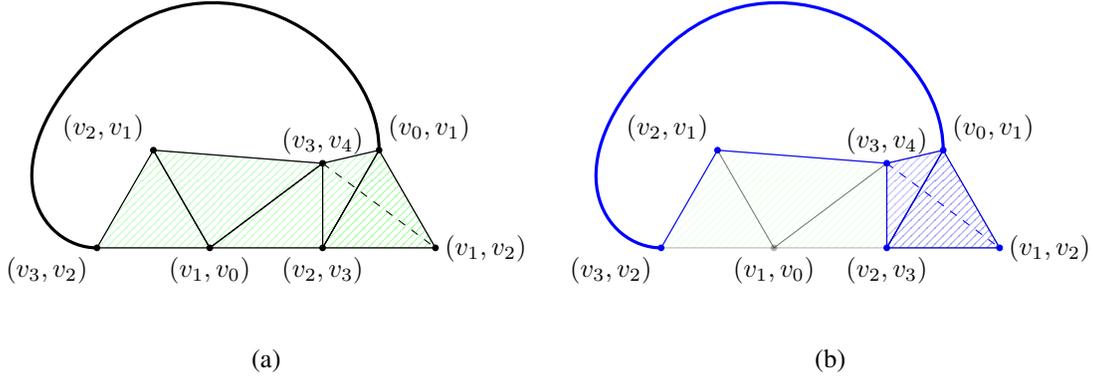

Observe that, arguing as in Lemma~\ref{lem:hequivbln},  we can not get a homotopy equivalence also for $n\equiv 1,2\mod 4$. In fact, as $X(\tW_2)\simeq S^0$ and $X(\widehat{\tL_2})\simeq \ast$, we get that $X(\tW_n)$ and $X(\widehat{\tL_n})$ are never homotopy equivalent for $n\equiv 2\mod 4$. Similarly for $n\equiv 1\mod 4$. However, one can still iteratively apply T operations, getting the following:

\begin{prop}\label{prop:wn}
    For $n\geq 1$ natural number, we have:
\[
X(\tW_n) \simeq \begin{cases} *  & \text{if }n \equiv 0,1 \mod 4\\
{S}^{\lfloor \frac{n -1}{2}\rfloor}  & \text{if }n \equiv 2,3 \mod 4\\
\end{cases}.\]
    In particular, the multipath complex $X(\tW_n)$ is either a sphere or it is contractible.
\end{prop}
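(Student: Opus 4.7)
The plan is to establish the recurrence $X(\tW_n)\simeq \Sigma^2 X(\tW_{n-4})$ for $n\ge 5$ by means of two successive generalised $T$-operations, and to finish by a case analysis modulo $4$ starting from small base cases.

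\medskip

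\noindent\emph{Base cases.} For $n=1$, $\tW_1$ is a coherently oriented path of length two, so $X(\tW_1)$ is a $1$-simplex and hence contractible. For $n=2$, a direct enumeration (already sketched in the discussion leading to Lemma~\ref{lem:hequivbln}) shows that $X(\tW_2)$ splits as a full $2$-simplex together with an isolated vertex, giving $X(\tW_2)\simeq S^0$. For $n=3$ and $n=4$, Lemma~\ref{lem:hequivbln} and Corollary~\ref{cor:BLnhat} yield $X(\tW_3)\simeq X(\widehat{\tL_3})\simeq S^1$ and $X(\tW_4)\simeq X(\widehat{\tL_4})\simeq \ast$.

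\medskip

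\noindent\emph{Inductive step.} For $n\ge 5$, I claim that $\tW_n$ is obtained from $\tW_{n-4}$ by two successive generalised $T$-operations, in the sense of the remark following Proposition~\ref{prop:toper}. In $\tW_{n-4}$ the right endpoint $v_{n-3}$ has outdegree zero, and its unique neighbour $v_{n-4}$ lies in the bidirectional middle (since $n-4\ge 1$). The first generalised $T$-operation is performed at $v_{n-3}$: it attaches a copy of $\tW_2$ which (i) completes the rightmost single edge into a bidirectional pair, (ii) extends the bidirectional chain by one new vertex, and (iii) produces an auxiliary inward-pointing single edge at the new right endpoint. The intermediate graph $\tG_1$ then has a rightmost vertex of indegree zero, at which the second generalised $T$-operation is applied; a short check of the edge identifications shows that it turns the transient inward single edge into a bidirectional one and installs the correct outward cap, producing exactly $\tW_n$. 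Since each generalised $T$-operation induces a suspension on multipath complexes, we obtain $X(\tW_n)\simeq \Sigma^2 X(\tW_{n-4})$.

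\medskip

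\noindent\emph{Conclusion and main obstacle.} Iterating the recurrence down to a base case, and writing $r\in\{1,2,3,4\}$ for the unique representative with $n\equiv r\pmod 4$, we get $X(\tW_n)\simeq \Sigma^{(n-r)/2} X(\tW_r)$. Plugging in the four values $\ast,S^0,S^1,\ast$ for $r=1,2,3,4$ produces exactly the four cases in the proposition. The only nontrivial point in the plan is the combinatorial verification that this specific pair of generalised $T$-operations reproduces $\tW_n$ rather than some other graph: after the first operation the right cap is oriented the ``wrong way,'' and only the second operation restores the outward orientation required by the definition of $\tW_n$. Tracking how the newly glued single edges combine with pre-existing end edges to form bidirectional pairs is the only subtlety.
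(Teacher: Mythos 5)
Your proposal is correct and takes essentially the same route as the paper: reduce to small base cases by (generalized) T-operations, each of which suspends the multipath complex, invoking Lemma~\ref{lem:hequivbln} and Corollary~\ref{cor:BLnhat} where needed. Your explicit check that it is a \emph{pair} of generalized T-operations that carries $\tW_{n-4}$ to $\tW_n$ (a single one produces a graph with an inward-pointing end edge, outside the $\tW$ family) is a useful clarification of a step the paper's proof only asserts.
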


\begin{proof}
The cases $n\equiv 0,3\mod 4$ directly follow from  Corollary~\ref{cor:BLnhat} and Lemma~\ref{lem:hequivbln}. The case $n\equiv 1\mod 4$ is obtained by iterating T-operations on the oriented linear simple path with two edges; which is contractible. To conclude, observe that $|X(\tW_2)|=S^0$ and that, for $n\equiv 2\mod 4$, $\tW_n$ is obtained by iterating the T-operation starting from~$\tW_2$. If~$n=4k+2$ the number of T-operation we need to perform is $k+1$. Which means, that $X(\tW_n)$ is homotopy equivalent to the $2k$-fold suspension over $S^0$, and thus $X(\tW_n)\simeq S^{2k}$ as desired.
\end{proof}

\subsection{Bidirectional polygonal digraphs: connectivity and homology}

Next we obtain some bounds on the degree of connectedness for the multipath complex of bidirectional polygonal digraphs.
An immediate consequence of Proposition~\ref{prop:connectivitylin} is that $X(\tL_n)$ is $\nu_n$-connected, where $\nu_n=\lfloor \frac{n-1}{2}\rfloor -1$. We use this fact to prove that the multipath complex of bidirectional polygonal digraphs are also highly connected. For all $n\geq 3$, let $\tP_n$ denote the bidirectional polygonal digraph with $n{+1}$ vertices ${0,}1,\dots, n$ and $2n {+ 2}$ edges $(i,i+1)$ and $(i+1,i)$ for all $i$ taken modulo~$n {+1}$ -- see Figure~\ref{fig:polydash}. This can be obtained from $\tL_{n+1}$ by identifying the external vertices.

To provide a connectivity bound, we use a more general version of Lemma~\ref{lem:BjornerLemma}:

\begin{lem}[Nerve Lemma]\label{lemma:nervelem}
Let $\Delta$ be a simplicial complex and $\{L_i\}_{i=1}^n$ a family of subcomplexes covering $\Delta$. Suppose that every non-empty intersection $L_{i_1}\cap\dots\cap L_{i_t}$ is $(k-t+1)$-connected, for $t\geq 1$. Then, $\Delta$ is $k$-connected if and only if the nerve of the covering  $\{L_i\}_{i=1}^n$ is $k$-connected.
\end{lem}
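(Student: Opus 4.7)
The plan is to compare both $\Delta$ and the nerve $N$ to a common auxiliary space, the \emph{Mayer--Vietoris blow-up} of the cover,
\[
Z \coloneqq \bigsqcup_{\sigma \in N} \Delta_\sigma \times L_\sigma \Big/ \sim,
\]
where $\Delta_\sigma$ denotes the closed geometric simplex of $N$ corresponding to $\sigma$, $L_\sigma = \bigcap_{i \in \sigma} L_i$, and the equivalence relation $\sim$ glues faces together in the obvious way (for $\tau \subseteq \sigma$ one identifies $\Delta_\tau \times L_\sigma \subseteq \Delta_\sigma \times L_\sigma$ with its image under the inclusion $L_\sigma \hookrightarrow L_\tau$). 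There are two natural projections $p_\Delta\colon Z \to \Delta$ and $p_N\colon Z \to N$, and my strategy is to prove that \emph{both} are $k$-equivalences; the equivalence of $k$-connectivity for $\Delta$ and $N$ is then immediate.

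The map $p_\Delta$ turns out to be a weak homotopy equivalence \emph{independently} of the connectivity hypothesis. Indeed, the fibre over $x \in \Delta$ can be identified with the geometric realization of the simplex $\{ i : x \in L_i \}\subseteq N$, which is contractible. This fibrewise contractibility can be promoted to a genuine weak equivalence either by applying Quillen's Theorem~A to the associated map of face posets, or by recognising $p_\Delta$ as a quasi-fibration with contractible fibres.

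The heart of the argument is to show that $p_N$ is a $k$-equivalence. I would filter $Z$ by the preimages $p_N^{-1}(N^{(t)})$ of the $t$-skeleta of $N$; passing from the $(t-1)$-st to the $t$-th stage attaches, for each $(t-1)$-simplex $\sigma$ of $N$, a piece of the form $(\Delta_\sigma,\, \partial \Delta_\sigma)\times L_\sigma$ on top of the previous stage. Since $L_\sigma$ is $(k-t+1)$-connected by hypothesis, the relative cells one attaches are highly enough connected that, after comparing with the corresponding skeletal filtration of $N$ and chasing the long exact sequences of pairs together with the five-lemma, one concludes inductively that $p_N$ induces isomorphisms on $\pi_i$ for $i \leq k$.

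The main obstacle I expect is the careful bookkeeping to verify that the numerology $(k - t + 1)$ in the hypothesis matches up exactly with the dimension shifts $+\,(t-1)$ coming from the skeletal filtration of $N$, so that every attachment contributes a $k$-connected relative pair. Since this is precisely Bj\"{o}rner's classical Nerve Lemma, I would ultimately simply invoke~\cite{BjorTopMeth} rather than write out the full homotopy-colimit argument.
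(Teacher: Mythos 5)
The paper does not actually prove this lemma: it is quoted as a known result, with the proof deferred to \cite[Lemma~1.2]{MR1253009} (Bj\"orner--Lov\'asz--Vre\'cica--\v{Z}ivaljevi\'c), so there is no internal argument to compare yours against. Your sketch is the standard homotopy-colimit proof that lies behind that citation: compare both $\Delta$ and the nerve $N$ to the Mayer--Vietoris blow-up $Z$, show the projection to $\Delta$ is a weak equivalence unconditionally, and show the projection to $N$ is a $k$-equivalence using the connectivity of the $t$-wise intersections against the dimension shift $t-1$ of the corresponding nerve simplices. The outline and the numerology are right, and since you ultimately fall back on citing Bj\"orner's handbook chapter \cite{BjorTopMeth} (where this is Theorem~10.6(ii)), you end in essentially the same place as the authors, just with a different reference.

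Two points in your justification are softer than you present them. First, fibrewise contractibility of $p_\Delta$ does not by itself give a weak equivalence, and $p_\Delta$ is not naturally a quasi-fibration; the clean statement you want is the Projection Lemma for closed covers of a CW complex by subcomplexes (the inclusions $L_\sigma\hookrightarrow L_\tau$ are cofibrations and the diagram is ``free''), or an induction over the skeleta of $\Delta$ with the gluing lemma. Second, in the filtration argument for $p_N$, the relative pairs $(\Delta_\sigma,\partial\Delta_\sigma)\times L_\sigma$ give the desired range of vanishing readily on homology (K\"unneth plus the $(k-t+1)$-connectivity of $L_\sigma$), but upgrading this to isomorphisms on $\pi_i$ for $i\le k$ needs relative Hurewicz/Blakers--Massey and separate care with $\pi_0,\pi_1$ when $k$ is small; this is precisely the bookkeeping carried out in the cited sources, so invoking them is the right call.
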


For a proof of this version of the nerve lemma, we refer to \cite[Lemma~1.2]{MR1253009}.

\begin{thm}\label{thm:highconn}
The multipath complex of the  digraph $\tP_{n}$  is $\nu_{n}$-connected.
\end{thm}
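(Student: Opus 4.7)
The plan is to invoke the Nerve Lemma (Lemma~\ref{lemma:nervelem}) with a cover of $X(\tP_n)$ by $n+1$ subcomplexes, each obtained by ``breaking'' the polygon at one pair of antiparallel edges. For each $i\in\{0,1,\dots,n\}$ (indices modulo $n+1$) I would set
\[L_i \;:=\; X\bigl(\tP_n \setminus \{(i, i+1),\, (i+1, i)\}\bigr) \;\subseteq\; X(\tP_n),\]
which is the multipath complex of a bidirectional linear graph on $n+1$ vertices. Proposition~\ref{prop:connectivitylin} immediately yields $L_i \simeq S^{\lfloor(n-1)/2\rfloor}$, so in particular each $L_i$ is $\nu_n$-connected.

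First I would verify that $\{L_i\}_{i=0}^n$ is a cover. If some multipath $M$ used at least one edge from each of the $n+1$ antiparallel pairs, then $|M|\geq n+1$; since every vertex of a multipath has in- and out-degree at most $1$, a double counting of edge endpoints would force every vertex of $\tP_n$ to have in- and out-degree exactly $1$ in $M$, making $M$ a disjoint union of directed cycles --- contradicting the definition of a multipath.

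The heart of the argument is identifying the $t$-fold intersections. For $S\subseteq\{0,\dots,n\}$ with $|S|=t\leq n$, partition $S$ into $m$ maximal cyclic runs of consecutive indices; then $L_S:=\bigcap_{i\in S}L_i$ is the multipath complex of the graph obtained from $\tP_n$ by deleting the pairs in $S$. This graph is a disjoint union of $m$ bidirectional linear graphs $\tL_{\ell_1},\dots,\tL_{\ell_m}$ (with $\sum_j\ell_j=n+1-t$) together with $t-m$ isolated vertices, which play no role in the multipath complex. Since taking multipath complexes turns disjoint unions of digraphs into simplicial joins, Proposition~\ref{prop:connectivitylin} combined with $S^a\ast S^b\simeq S^{a+b+1}$ gives
\[L_S \;\simeq\; S^{d_S},\qquad d_S \;=\; \sum_{j=1}^{m}\left\lfloor \tfrac{\ell_j-1}{2}\right\rfloor+m-1.\]
Using $\lfloor(\ell_j-1)/2\rfloor\geq(\ell_j-2)/2$ together with $\sum_j\ell_j=n+1-t$, one obtains the integer bound $d_S\geq\lceil(n+1-t)/2\rceil-1$, and a short parity case-check shows this is $\geq\nu_n-t+2$ for every $t\geq 1$. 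Equivalently, $L_S$ is $(\nu_n-t+1)$-connected, as the hypothesis of Lemma~\ref{lemma:nervelem} requires.

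Finally, $L_S$ is non-empty precisely when $|S|\leq n$ (the whole edge set of $\tP_n$ disappears only if all $n+1$ pairs are deleted), so the nerve of the cover equals $\partial\Delta^n\simeq S^{n-1}$, which is $(n-2)$-connected and hence $\nu_n$-connected (since $\nu_n\leq n-2$). Lemma~\ref{lemma:nervelem} then yields that $X(\tP_n)$ is $\nu_n$-connected. The main technical hurdle is the parity-sensitive inequality $d_S\geq\nu_n-t+2$: it is tight in several configurations (for instance $t=2$, $n$ odd, with both $\ell_j$ even), so the elementary estimates above need to be sharp enough to clear the threshold uniformly in all cases.
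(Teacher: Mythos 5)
Your proposal is correct and follows essentially the same route as the paper: the same cover by the complexes $X(\tP_n\setminus\{e_i,e_i'\})\cong X(\tL_n)$, the identification of the nerve with $\partial\Delta^n\simeq S^{n-1}$, and the join/sphere description of the intersections via Proposition~\ref{prop:connectivitylin} followed by a floor-function estimate showing each $t$-fold intersection is $(\nu_n-t+1)$-connected, so that the Nerve Lemma applies. Your explicit verification of the covering property and the run/isolated-vertex bookkeeping are details the paper leaves implicit, but the argument is the same.
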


\begin{proof}
For the bidirectional polygon $\tP_n$ on $n$ vertices, denote by $e_i, e_i'$ the edges connecting the vertices $i$ and $i+1$; we use the convention $e_i\coloneqq (i,i+1)$ and $e_i'\coloneqq (i+1,i)$. We now decompose the multipath complex $\tP_n$, for $n\geq {2}$, as a union of multipath complexes of the digraphs obtained by deleting the edges $e_i,e_i'$ from $\tP_n$. To be more precise, consider the multipath complex $X_i\coloneqq X(\tP_n\setminus \{e_i,e_i'\})$; it is clear that the family $\{X_i\}_{i=1}^n$ yields a covering of $X(\tP_n)$.  Furthermore, note that the nerve of this covering is $(n-2)$-connected. In fact,  it has $n{+1}$ vertices and all intersections, but the maximal one, are non-empty; topologically, the nerve is obtained from an $n$-simplex by removing the interior, in turn yielding an $(n-1)$-dimensional sphere.

Note that the complex $X_i$ can be identified with $ X(\tL_{n})$, the multipath complex of the bidirectional linear graph on~$n$ vertices. Hence, by Proposition~\ref{prop:connectivitylin}, each multipath complex $X_i$  is $\nu_{n}$-connected. 
If we show that \[ Y_h = X_{i_1}\cap \dots \cap X_{i_h}\] is $(\nu_{n}-h+1)$-connected for all $h\geq 1$ and $i_1 < i_2 < ...< i_h$, then, by Lemma~\ref{lemma:nervelem}, also $X(\tP_n)$ is $\nu_{n}$-connected -- as the nerve of the associated covering is $(n-2)$-connected, and $\nu_{n}\leq n-2$. To this end, observe that $ Y$ is a multipath complex of a disjoint union of bidirectional linear graphs, say $\tL_{r_1},...,\tL_{r_s}$.
We know that the multipath complex of a disjoint union of graphs is the join of the corresponding multipath complexes. It follows that
\[ Y_h\simeq S^{\nu_{r_1}{+1}}  \ast \cdots \ast  S^{\nu_{r_s}{+1}} \simeq   S^{\sum_{j} \nu_{r_j} + {2}s -1} \ .\]
We can give a bound for the dimension of the above sphere, in fact:
\[ c : = \sum_{j=1}^s \nu_{r_j} + s =\sum_{j=1}^{s} \left\lfloor \frac{r_j -1}{2} \right\rfloor = \sum_{r_j\text{ odd }} \frac{r_j - 1}{2} + \sum_{r_j\text{ even }} \frac{r_j - 2}{2} =   \]
\[ = \sum_{j=1}^{s} \frac{r_j -1}{2} - \frac{1}{2} \# \{ r_j\text{ even }\} \geq  \]
\[\geq \sum_{j=1}^{s} \frac{r_j -1}{2} - \frac{s}{2} \overset{(\star)}{=}\]
\[ \overset{(\star)}{=} \frac{n {+1} -h -s}{2} - \frac{s}{2} = \frac{n - 1 + 2}{2} - h + \frac{h}{2} - s \geq \left\lfloor \frac{n-1}{2} \right\rfloor - h -s +1 +\frac{h}{2}.\]
In the equality marked with ($\star$) we used that $\sum_j r_j = n+1-h$.
Since $c$ is an integer and $h>0$, it follows that~$c \geq \nu_{n} + 1 - h -s + 2$.
Therefore, the complex $Y_h$ is at least $c + s - 2 = (\nu_{n}-h + 1)$-connected for each $h\geq 1$, concluding the proof. 
\end{proof}

\begin{rem}
Note that the worst connectivity bound of the intersections $X_{i_1}\cap \dots \cap X_{i_h}$ in the proof of the above proposition is achieved when, after reordering of the indices, the intersection yields a \emph{connected} bidirectional linear digraph. 
\end{rem}

\begin{figure}[h]
\centering
\newdimen\R
\R=2.00cm

\newdimen\Rb
\Rb=1.75cm
\begin{tikzpicture}
%\draw[xshift=5.0\R,latex-] (0:\R) \foreach \x in {45,90,...,359} {  -- (\x:\R)};
\draw[xshift=5.0\R, fill] (270:\R) circle(.05)  node[below] {$v_0$};
\draw[xshift=5.0\R,fill] (225:\R) circle(.05)  node[below left]   {$v_1$};
\draw[xshift=5.0\R,fill] (180:\R) circle(.05)  node[left] {$v_2$};
\draw[xshift=5.0\R,fill] (135:\R) circle(.05)  node[above left] {$v_3$};
\draw[xshift=5.0\R, fill] (90:\R) circle(.05)  node[above] {$v_4$};
\draw[xshift=5.0\R,fill] (45:\R) circle(.05)  node[above right] {$v_5$};
\draw[xshift=5.0\R,fill] (0:\R) circle(.05)  node[right] {$v_6$};
\draw[xshift=5.0\R,fill] (315:\R) circle(.05)  node[below right] {$v_{n}$};

\node[xshift=5.0\R] (v0) at (270:\R) { };
\node[xshift=5.0\R] (v1) at (225:\R) { };
\node[xshift=5.0\R] (v2) at (180:\R) { };
\node[xshift=5.0\R] (v3) at (135:\R) { };
\node[xshift=5.0\R] (v4) at (90:\R) { };
\node[xshift=5.0\R] (v5) at (45:\R) { };
\node[xshift=5.0\R] (v6) at (0:\R) { };
\node[xshift=5.0\R] (vn) at (315:\R) { };

\draw[thick, blue, -latex] (v0) to[bend left] (v1);
\draw[thick, blue, -latex] (v1) to[bend left] (v2);
\draw[thick, blue, -latex] (v2) to[bend left] (v3);
\draw[thick, blue, -latex] (v3) to[bend left] (v4);
\draw[thick, blue, -latex] (v4) to[bend left] (v5);
\draw[thick, blue, -latex] (v5) to[bend left] (v6);
\draw[thick, blue, -latex] (vn) to[bend left] (v0);

\draw[thick, red, latex-] (v0) to[bend right] (v1);
\draw[thick, red, latex-] (v1) to[bend right] (v2);
\draw[thick, red, latex-] (v2) to[bend right] (v3);
\draw[thick, red, latex-] (v3) to[bend right] (v4);
\draw[thick, red, latex-] (v4) to[bend right] (v5);
\draw[thick, red, latex-] (v5) to[bend right] (v6);
\draw[thick, red, latex-] (vn) to[bend right] (v0);

\draw[xshift=5.0\R, fill] (292.5:\Rb) node[above left] {$e'_{n}$};
\draw[xshift=5.0\R,fill] (247.5:\Rb) node[above right] {$e'_0$};
\draw[xshift=5.0\R,fill] (202.5:\Rb)   node[above right] {$e'_1$};
\draw[xshift=5.0\R,fill] (157.5:\Rb)  node[below right] {$e'_2$};
\draw[xshift=5.0\R, fill] (112.5:\Rb)   node[below right] {$e'_3$};
\draw[xshift=5.0\R,fill] (67.5:\Rb) node[below left] {$e'_4$};
\draw[xshift=5.0\R,fill] (22.5:\Rb) node[below left] {$e'_5$};

\draw[xshift=5.0\R, fill] (292.5:\R) node[below right] {$e_{n}$};
\draw[xshift=5.0\R,fill] (247.5:\R) node[below left] {$e_0$};
\draw[xshift=5.0\R,fill] (202.5:\R)   node[below left] {$e_1$};
\draw[xshift=5.0\R,fill] (157.5:\R)  node[above left] {$e_2$};
\draw[xshift=5.0\R, fill] (112.5:\R)   node[above left] {$e_3$};
\draw[xshift=5.0\R,fill] (67.5:\R) node[above right] {$e_4$};
\draw[xshift=5.0\R,fill] (22.5:\R) node[above right] {$e_5$};
\draw[xshift=4.95\R,fill] (337.5:\R)  node {$\cdot$} ;
\draw[xshift=4.95\R,fill] (333:\R)  node {$\cdot$} ;
\draw[xshift=4.95\R,fill] (342:\R)  node {$\cdot$} ;
\end{tikzpicture}
\caption{A bidirectional polygonal digraph on $2n$ edges with (at least) two vertices that are neither sources nor sinks (in blue). The dashed line shows the separation between the two modules.} 
\label{fig:polydash}
\end{figure}
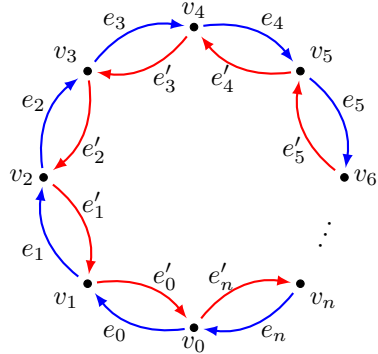

Using the Mayer-Vietoris spectral sequence, we can compute the whole homology of bidirectional polygonal digraphs. 

\begin{thm}\label{thm:connectivity_homology}
Let $X(\tP_n)$ be the multipath complex of the digraph $\tP_n$, for $n\geq 2$. Then, we have 
\[
\H_i(X(\tP_n)) = \begin{cases} 
\bZ  & \text{ if } i=0 \\
\bZ^2  & \text{ if } i=n-1 \\
\bZ  & \text{ if } i={\lfloor \frac{n - 1}{2} \rfloor} \text{ and } n\equiv 1,2 \mod 4\\
\bZ^3 & \text{if }   i={\lfloor \frac{n - 1}{2} \rfloor} \text{ and } n \equiv 3 \mod 4\\
\bZ  & \text{ if } i={\lfloor \frac{n - 1}{2} \rfloor}+1 \text{ and } n\equiv 0 \mod 4
\end{cases}.\]
In particular, $\tP_n$ is not $(\nu_n+1)$-connected for $n\equiv 1,2,3 \mod 4$.  
\end{thm}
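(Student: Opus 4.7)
The plan is to apply the Mayer--Vietoris spectral sequence to the cover $\{X_i\}_{i=0}^{n}$ of $X(\tP_n)$ used in the proof of Theorem~\ref{thm:highconn}, where $X_i = X(\tP_n\setminus\{e_i,e_i'\}) \cong X(\tL_n)$. The spectral sequence takes the form
\[ E_1^{p,q} = \bigoplus_{|\sigma|=p+1} H_q(X_\sigma) \;\Longrightarrow\; H_{p+q}(X(\tP_n)), \]
with $X_\sigma := \bigcap_{i\in\sigma} X_i$. Each intersection $X_\sigma$ is the multipath complex of $\tP_n$ with $p+1$ edge-pairs deleted, which is a disjoint union of bidirectional path graphs $\tL_{r_1},\dots,\tL_{r_{p+1}}$ with $\sum_j r_j = n-p$ (possibly some $r_j=0$). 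Since multipath complexes of disjoint unions are joins of the pieces, Proposition~\ref{prop:connectivitylin} yields $X_\sigma \simeq S^{d(\sigma)}$ for an explicit
\[ d(\sigma) = \sum_{r_j\geq 1}\left\lfloor \frac{r_j-1}{2}\right\rfloor + s - 1 = \frac{n-p+s-e}{2} - 1, \]
where $s$ is the number of non-trivial arcs and $e$ the number of those with even $r_j$. In particular, $E_1^{p,q}$ is supported only on the two rows $q=0$ and $q=d(\sigma)$ (possibly coinciding).

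The next step is to extract $H_0$ and $H_{n-1}$ from the bottom row. The row $E_1^{\bullet,0}$ is almost the augmented chain complex of the nerve of the cover, except that each $\sigma$ with $X_\sigma\simeq S^0$ contributes $\bZ^2$ rather than $\bZ$. The underlying nerve equals $\partial\Delta^n \simeq S^{n-1}$ (every proper sub-intersection is non-empty, but the full one is void), accounting for one $\bZ$ in $H_0$ and a first $\bZ$ in $H_{n-1}$. The second $\bZ$ in $H_{n-1}(X(\tP_n))=\bZ^2$ comes from the extra direct summands of the $S^0$-intersections; tracing the $d_1$ differential along the bottom row shows that exactly one of these ``extra'' classes survives, producing the additional top-degree cycle.

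The middle-degree classes come from the upper rows $q=d(\sigma)\geq 1$. From the identity $p+d(\sigma) = \tfrac{1}{2}(n+p+s-e)-1$ together with the bounds $s\leq p+1$, $e\geq 0$, one gets $p+d(\sigma) \leq \lfloor(n-1)/2\rfloor + 1$, with equality requiring specific parity relations between $n$, $p$, $s$, $e$. A case analysis on $n\bmod 4$ then determines which configurations achieve the extremal total degrees: after computing the relevant $d_1$ differentials and verifying no higher-page differential reaches these classes, one obtains a single $\bZ$ survivor in degree $\lfloor(n-1)/2\rfloor$ when $n\equiv 1,2\bmod 4$; three independent survivors in the same degree when $n\equiv 3\bmod 4$ (arising from a combination of $p=0$ and $p=1$ extremal entries after $d_1$ cancellation); and a single class in the shifted degree $\lfloor(n-1)/2\rfloor+1$ when $n\equiv 0\bmod 4$, in which case the extremum can only be realised at $p\geq 1$ (the parity obstruction preventing all $r_j$ from being simultaneously odd).

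The main obstacle is the explicit bookkeeping of the $d_1$ differentials on the upper rows. These decompose as alternating sums of inclusion-induced maps $H_{d(\sigma)}(X_\sigma)\to H_{d(\sigma')}(X_{\sigma'})$ for $\sigma'\subset\sigma$ of one smaller cardinality, and each such map must be tracked through the join decompositions and the explicit sphere representatives provided by Propositions~\ref{prop:connectivitylin} and \ref{prop:wn}. A secondary task is to verify that higher-page differentials $d_r$, $r\geq 2$, do not affect the relevant total degrees; this should follow from a dimension argument once $E_2$ is identified, as the remaining non-zero entries lie far apart in the $(p,q)$-plane.
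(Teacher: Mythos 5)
Your choice of cover is where the argument breaks down. The quantitative claim that drives your outline, namely $p+d(\sigma)\leq \lfloor (n-1)/2\rfloor+1$, is false: from $p+d(\sigma)=\tfrac12(n+p+s-e)-1$ and $s\leq p+1$, $e\geq 0$ you only get $p+d(\sigma)\leq \tfrac{n-1}{2}+p$, so the bound holds for $p\leq 1$ but not beyond. Concretely, for $n=6$ take $\sigma=\{0,2,4\}$ (delete the pairs $e_0,e_0',e_2,e_2',e_4,e_4'$): the remaining digraph is $\tL_1\sqcup\tL_1\sqcup\tL_2$, so $X_\sigma\simeq S^0\ast S^0\ast S^0=S^2$ and the entry sits in total degree $p+q=4>\lfloor 5/2\rfloor+1=3$, while $\H_4(X(\tP_6))=0$. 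Worse, deleting alternating edge-pairs produces upper-row entries on the diagonal $p+q=n-1$ (e.g.\ $n=5$, $\sigma=\{0,2,4\}$ gives $S^2$ in position $(2,2)$), so even the top class $\bZ^2$ is not accounted for by the bottom row alone, contrary to your description of where the ``second'' top-degree generator comes from. In short, with the $(n+1)$-piece cover the $E_1$ page is populated in essentially every total degree between $\lfloor(n-1)/2\rfloor$ and $n-1$, and the entire content of the theorem becomes the massive cancellation of these entries under $d_1$ and possibly higher differentials --- precisely the part you defer as ``bookkeeping'' and never carry out. The multiplicities $1$, $3$, $1$ in the middle degree and the kernel computation giving $\bZ^2$ in degree $n-1$ are asserted, not derived, so as it stands this is a plan with a wrong key estimate rather than a proof.

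The paper avoids this combinatorial explosion by using a cover with only three pieces: $A=\mathrm{star}(e_0)$, $B=\mathrm{star}(e_0')$ and $C=\mathrm{astar}(e_0)\cap\mathrm{astar}(e_0')$. Here $A$ and $B$ are cones, $C\cong X(\tL_n)$, $A\cap B=A\cap B\cap C\cong X(\tL_{n-2})$, and $A\cap C\cong B\cap C$ is $X(\tW_{n-1})$ with the interior of one maximal cell removed, whose homology is computed by an ordinary Mayer--Vietoris argument using Proposition~\ref{prop:wn}. The resulting spectral sequence has only the columns $p=0,1,2$, its few entries are pinned down by Proposition~\ref{prop:connectivitylin}, and the only differentials that need to be understood are handled by the explicit sphere representatives of Eq.~\ref{eq:almostbid2} (for $n\equiv 0\bmod 4$) and by the identity $A\cap B=A\cap B\cap C$. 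If you want to salvage your approach, you would either have to organise the huge cancellation in your $E_1$ page (for instance by a discrete-Morse or long-exact-sequence induction on the number of deleted pairs), or switch to a small cover of this kind where the differentials can actually be identified.
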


\begin{proof}
We start by considering three sub-complexes of the simplicial complex $X = X(\mathtt{BP}_{n})$, namely: $A = {\rm star}(e_0)$, $B = {\rm star}(e'_0)$, and $C = {\rm astar}(e_0) \cap {\rm astar}(e'_0)$. 
Note that the complement of $A$ is a subset of $B \cup C$, and therefore $X = A \cup B \cup C$. By construction, $A$ and $B$ are contractible. 
While $C$ is the multipath complex of $\tL_{n}$ -- spanned by the edges~$e_1,...,e_{n},e_1',...,e_{n}'$.
It remains to analyse the intersections. Clearly, $A\cap C \cong B\cap C$ (as simplicial complexes) and they both are isomorphic to the multipath complex of $\tW_{n-1}$ with the interior of a maximal cell $\Delta$ spanned by~$e_1,\dots , e_n$ removed (because it is not in $A$). Analogously for $B$. 
Finally, $A \cap B =  A \cap B \cap C$ can be identified with $\tL_{n-2}$.

By Proposition~\ref{prop:connectivitylin}, $C$ is homotopy equivalent to $S^{\lfloor \frac{n - 1}{2} \rfloor}$ and both $A\cap B$ and $A\cap B \cap C$ are homotopy equivalent to $S^{\lfloor \frac{n - 3}{2} \rfloor}$. 
Now, using Proposition~\ref{prop:wn}, and the Mayer-Vietoris long exact sequence in homology applied to the triple~$(\tW_{n-1},\,\tW_{n-1}\setminus\Delta,\, \Delta)$, we obtain 
\[
H_*(A\cap C) \cong H_*(B\cap C ) 
\cong \begin{cases} \bZ   & \text{if }* = 0,\, n -2\\
\bZ
  & \text{if }* = \lfloor \frac{n -2}{2}\rfloor\text{ and } n \equiv 0,3 \mod 4 \\
\end{cases}.\]
We can put it into the first page of the Mayer-Vietoris spectral sequence associated to the cover $\{A,B,C\}$. When $n\equiv 1,2 \mod 4$, we get:
\begin{equation*}
\begin{tikzpicture}
  \matrix (m) [matrix of math nodes,
    nodes in empty cells,nodes={minimum width=5ex,
    minimum height=5ex,outer sep=-5pt},
    column sep=1ex,row sep=1ex]{
          q      &      &     &     & E^1\\
          {n-2} \qquad    &  0 &  \bZ^2   & 0 & \\
          {\lfloor \frac{n - 1}{2} \rfloor} \qquad    &  \bZ &  0  & 0 & \\
          {\lfloor \frac{n - 3}{2} \rfloor}  \qquad   &  0  & \bZ  &  \bZ  & \\
    \quad\strut &   0  &  1  &  2  & \strut p\\};
\draw[thick] (m-1-1.east) -- (m-5-1.east) ;
\draw[thick] (m-5-1.north) -- (m-5-5.north) ;
\end{tikzpicture}
\quad
\begin{tikzpicture}
  \matrix (m) [matrix of math nodes,
    nodes in empty cells,nodes={minimum width=5ex,
    minimum height=5ex,outer sep=-5pt},
    column sep=1ex,row sep=1ex]{
          q      &      &     &     & E^2\\          
          {n-2} \qquad    &  0 &  \bZ^2   & 0 & \\
          {\lfloor \frac{n - 1}{2} \rfloor} \qquad     & \node(b) {\bZ}; &  0  & 0 & \\
          {\lfloor \frac{n - 3}{2} \rfloor} \qquad      &  0  & 0  &0 & \\
    \quad\strut &   0  &  1  &  2  & \strut p\\};
\draw[thick] (m-1-1.east) -- (m-5-1.east) ;
\draw[thick] (m-5-1.north) -- (m-5-5.north) ;
\end{tikzpicture}
\end{equation*}
The inclusion $A\cap B\cap C$ into $A\cap B$ is always a homeomorphism, which kills the homology classes corresponding to those at the second page. When $n\equiv 0 \mod 4$ we get $\lfloor \frac{n - 1}{2} \rfloor=\lfloor \frac{n - 2}{2} \rfloor$, and the pages of the spectral sequence look like:
\begin{equation*}
\begin{tikzpicture}
  \matrix (m) [matrix of math nodes,
    nodes in empty cells,nodes={minimum width=5ex,
    minimum height=5ex,outer sep=-5pt},
    column sep=1ex,row sep=1ex]{
          q      &      &     &     & E^1\\
          {n-2} \qquad    &  0 &  \bZ^2   & 0 & \\
          {\lfloor \frac{n - 1}{2} \rfloor} \qquad    &  \bZ &  \bZ^2  & 0 & \\
          {\lfloor \frac{n - 3}{2} \rfloor}  \qquad   &  0  & \bZ  &  \bZ  & \\
    \quad\strut &   0  &  1  &  2  & \strut p\\};
\draw[thick] (m-1-1.east) -- (m-5-1.east) ;
\draw[thick] (m-5-1.north) -- (m-5-5.north) ;
\end{tikzpicture}
\quad
\begin{tikzpicture}
  \matrix (m) [matrix of math nodes,
    nodes in empty cells,nodes={minimum width=5ex,
    minimum height=5ex,outer sep=-5pt},
    column sep=1ex,row sep=1ex]{
          q      &      &     &     & E^2\\          
          {n-2} \qquad    &  0 &  \bZ^2   & 0 & \\
          {\lfloor \frac{n - 1}{2} \rfloor} \qquad     & \node(b) {0}; &  \bZ  & 0 & \\
          {\lfloor \frac{n - 3}{2} \rfloor} \qquad      &  0  & 0  &0 & \\
    \quad\strut &   0  &  1  &  2  & \strut p\\};
\draw[thick] (m-1-1.east) -- (m-5-1.east) ;
\draw[thick] (m-5-1.north) -- (m-5-5.north) ;
\end{tikzpicture}
\end{equation*}
Using Eq.~\ref{eq:almostbid2}, observe that the first differentials for $q=\lfloor \frac{n - 1}{2} \rfloor$ of $A\cap C, B\cap C$ in $C$ are isomorphisms, hence in the second page there is only one homology group left in bidegree $(1, \lfloor \frac{n - 1}{2} \rfloor)$.

When $n\equiv 3 \mod 4$ , we have the equality $\lfloor \frac{n - 2}{2} \rfloor=\lfloor \frac{n - 3}{2} \rfloor$, and thus the first two pages of the spectral sequence are as follows:

\begin{equation*}
\begin{tikzpicture}
  \matrix (m) [matrix of math nodes,
    nodes in empty cells,nodes={minimum width=5ex,
    minimum height=5ex,outer sep=-5pt},
    column sep=1ex,row sep=1ex]{
          q      &      &     &     & E^1\\
          {n-2} \qquad    &  0 &  \bZ^2   & 0 & \\
          {\lfloor \frac{n - 1}{2} \rfloor} \qquad    &  \bZ &  0  & 0 & \\
          {\lfloor \frac{n - 2}{2} \rfloor}  \qquad   &  0  & \bZ^2\oplus \bZ  &  \bZ  & \\
    \quad\strut &   0  &  1  &  2  & \strut p\\};
\draw[thick] (m-1-1.east) -- (m-5-1.east) ;
\draw[thick] (m-5-1.north) -- (m-5-5.north) ;
\end{tikzpicture}
\quad
\begin{tikzpicture}
  \matrix (m) [matrix of math nodes,
    nodes in empty cells,nodes={minimum width=5ex,
    minimum height=5ex,outer sep=-5pt},
    column sep=1ex,row sep=1ex]{
          q      &      &     &     & E^2\\          
          {n-2} \qquad    &  0 &  \bZ^2   & 0 & \\
          {\lfloor \frac{n - 1}{2} \rfloor} \qquad     & \node(b) {\bZ}; &  0  & 0 & \\
          {\lfloor \frac{n - 2}{2} \rfloor} \qquad      &  0  & \bZ^2  &0 & \\
    \quad\strut &   0  &  1  &  2  & \strut p\\};
\draw[thick] (m-1-1.east) -- (m-5-1.east) ;
\draw[thick] (m-5-1.north) -- (m-5-5.north) ;
\end{tikzpicture}
\end{equation*}
In both cases the cancellation is due to the fact that $A\cap B\cap C = A\cap B$. This concludes the proof.
\end{proof}

Next we consider the effect on the homology of $\tP_n$ when edges are removed, to try and understand where the homological complexity appears compared to the cyclic graph $C_n$. 
 To this aim we keep all the edges of one orientation, such as all \emph{forward edges} $e_i$, and remove some subset of the \emph{backwards edges} $e'_i$.
 If we remove a single edge, or a pair of consecutive edges, we get the following conjectures, which have been checked computationally for all $n\le 14$:

\begin{conj}\label{conj:remove_edges1}
Let $\taP{-1}$ be the graph obtained by removing a single edge from $\tP_n$, then:
\[
\H_i(\taP{-1}) = \begin{cases} 
\bZ  & \text{ if } i=0 \\
\bZ  & \text{ if } i=n-1 \\
\bZ  & \text{ if } i={\lfloor \frac{n - 1}{2} \rfloor} \text{ and } n\equiv 1 \mod 4\\
\bZ & \text{if }   i={\lfloor \frac{n - 2}{2} \rfloor} \text{ and } n \equiv 2 \mod 4\\
\bZ^2  & \text{ if } i={\lfloor \frac{n - 1}{2} \rfloor}+1 \text{ and } n\equiv 3 \mod 4
\end{cases}.\]\end{conj}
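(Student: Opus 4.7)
The plan is to adapt the Mayer--Vietoris argument used in the proof of Theorem~\ref{thm:connectivity_homology} to the punctured polygon. Without loss of generality assume the deleted edge is $e_0'=(v_1,v_0)$, so that its reverse $e_0=(v_0,v_1)$ still lies in $\taP{-1}$, and decompose
\[
X(\taP{-1})=A\cup B,\qquad A={\rm star}(e_0),\qquad B={\rm astar}(e_0).
\]
Then $A$ is a cone and hence contractible, while $B$ is the multipath complex of $\taP{-1}\setminus\{e_0\}$. Once both $e_0$ and $e_0'$ are deleted from $\tP_n$ the cycle opens up into the bidirectional linear graph on the vertices $v_1,v_2,\dots,v_n,v_0$, which is a copy of $\tL_n$, so Proposition~\ref{prop:connectivitylin} yields $B\simeq S^{\lfloor(n-1)/2\rfloor}$.

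The heart of the computation is the intersection $A\cap B={\rm link}(e_0)$. Unpacking composability, a multipath $M$ of $\tL_n$ lies in the link iff $M\cup\{e_0\}$ is still a multipath, which forces three conditions: $M$ must avoid the edge $(v_0,v_n)$ out of $v_0$, must avoid the edge $(v_2,v_1)$ into $v_1$, and must not contain all the edges of the directed Hamiltonian path $v_1\to v_2\to\cdots\to v_n\to v_0$ (which together with $e_0$ would close into a cycle). The first two conditions cut $\tL_n$ down to $\tW_{n-1}$, while the third removes the interior of the unique top-dimensional simplex $\Delta$ (of dimension $n-1$) of $X(\tW_{n-1})$ corresponding to this Hamiltonian path. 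Hence $A\cap B\cong X(\tW_{n-1})\setminus\mathrm{int}(\Delta)$, and the long exact sequence of this pair (together with excision and Proposition~\ref{prop:wn}) gives $H_\ast(A\cap B)$ explicitly: a new $\bZ$ appears in degree $n-2$, on top of the homology of $X(\tW_{n-1})$ itself, which is either trivial or a single sphere $S^{\lfloor(n-2)/2\rfloor}$ depending on $n\bmod 4$.

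Plugging this into the Mayer--Vietoris long exact sequence
\[
\cdots\to H_k(A\cap B)\to H_k(B)\to H_k(X(\taP{-1}))\to H_{k-1}(A\cap B)\to\cdots
\]
should then deliver the conjectured groups: the top class $H_{n-1}=\bZ$ arises from the $\bZ$ in degree $n-2$ of $A\cap B$ via the connecting map, the middle class comes from the sphere of $B$, and in residues $n\equiv 0,3\pmod 4$ the extra sphere from $X(\tW_{n-1})$ contributes an additional summand. The hard part will be controlling the inclusion-induced map $H_\ast(A\cap B)\to H_\ast(B)$ in the middle degree: for $n\equiv 0,3\pmod 4$ one must decide whether the sphere class carried by $X(\tW_{n-1})$ pushes to a generator of $H_{\lfloor(n-1)/2\rfloor}(B)$ or to zero, and this extension problem is precisely what distinguishes the four cases and produces the jump to a rank-$2$ group in the residue $n\equiv 3\pmod 4$. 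A natural route is to trace the join representatives of the sphere of $X(\tW_{n-1})$ (arising from the iterated $T$-operations of Proposition~\ref{prop:wn}) and compare them with the join models of $H_\ast(\tL_n)$ in Equations~(\ref{eq:almostbid}) and~(\ref{eq:almostbid2}); the conjectured pattern is consistent with these representatives being trivial in $H_\ast(B)$ in the relevant degrees, from which the case analysis should follow by chain-level bookkeeping.
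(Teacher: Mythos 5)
You should first be aware that the paper does not prove this statement: it is stated as a conjecture, supported only by computer calculations for $n\le 14$, so there is no proof of record to compare yours against, and a correct argument here would actually be new content. Your setup is the natural one and closely parallels the paper's proof of Theorem~\ref{thm:connectivity_homology}: the identifications $A={\rm star}(e_0)\simeq\ast$, $B\cong X(\tL_n)\simeq S^{\lfloor (n-1)/2\rfloor}$, and $A\cap B\cong X(\tW_{n-1})$ minus the open top cell (so an extra $\bZ$ in degree $n-2$ on top of $H_*(X(\tW_{n-1}))$ from Proposition~\ref{prop:wn}) are all correct. But the proposal is not a proof: everything after that is conditional (``should then deliver'', ``the hard part will be controlling the inclusion-induced map'', ``should follow by chain-level bookkeeping''), and the decisive step --- identifying the maps in the Mayer--Vietoris sequence --- is exactly what is left undone. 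The only residue where such a map genuinely matters is $n\equiv 0\pmod 4$, where the sphere class of $A\cap B$ and of $B$ sit in the same degree $\lfloor (n-2)/2\rfloor$ and one must show the inclusion-induced map is an isomorphism (the analogue of the paper's use of Eq.~\eqref{eq:almostbid2} in Theorem~\ref{thm:connectivity_homology}); you name this problem but do not solve it.

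More seriously, your claim that the scheme is ``consistent with'' the conjectured answer does not survive actually running the sequence in the residue $n\equiv 3\pmod 4$. There the extra class of $A\cap B$ lies in degree $\lfloor (n-2)/2\rfloor=\lfloor (n-1)/2\rfloor-1$, one below the sphere of $B$, so there is no map to control: exactness of
\[
0\to H_{\lfloor (n-1)/2\rfloor}(B)\to H_{\lfloor (n-1)/2\rfloor}(X(\taP{-1}))\to H_{\lfloor (n-1)/2\rfloor-1}(A\cap B)\to 0
\]
forces a free rank-two group in degree $\lfloor (n-1)/2\rfloor$, and nothing in degree $\lfloor (n-1)/2\rfloor+1$, which is not where the conjecture places the $\bZ^2$. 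A hand check at $n=3$ confirms this: $X(\taP{-1})$ is the boundary of the tetrahedron on $e_0,e_1,e_2,e_3$, a solid triangle on $e_1',e_2',e_3'$, and three connecting edges, hence homotopy equivalent to $S^2\vee S^1\vee S^1$, with the rank-two group in degree $1=\lfloor (n-1)/2\rfloor$. So before any ``chain-level bookkeeping'' you must resolve this discrepancy --- either locate a flaw in the identification of $A\cap B$ (I do not see one) or conclude that the conjectured degree in the $n\equiv 3$ clause needs correction; as written, the proposal asserts it will reproduce a formula that its own machinery appears to contradict, and in no residue does it actually complete the computation.
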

\begin{conj}\label{conj:remove_edges2}
Let $\taP{-2}$ be obtained by removing two consecutive edges, of the same orientation, from $\tP_n$, then:
\[
\H_i(\taP{-2}) = \begin{cases} 
\bZ  & \text{ if } i=0 \\
\bZ  & \text{ if } i=n-1 \\
\bZ & \text{if }   i={\lfloor \frac{n - 2}{2} \rfloor} \text{ and } n \equiv 2 \mod 4\\
\bZ  & \text{ if } i={\lfloor \frac{n - 1}{2} \rfloor}+1 \text{ and } n\equiv 3 \mod 4
\end{cases}.\]
\end{conj}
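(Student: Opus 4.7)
The plan is to compute the homology of $X(\taP{-2})$ via a Mayer--Vietoris argument with a two-piece cover adapted to the asymmetry created by removing two consecutive same-orientation edges. Without loss of generality, assume $\taP{-2}$ is obtained from $\tP_n$ by removing the backward edges $e_0'$ and $e_1'$, so that vertex $1$ has a unique incoming edge $e_0$ and a unique outgoing edge $e_1$. Set $X = X(\taP{-2})$, $A = \mathrm{star}(e_0)$, and $B = \mathrm{astar}(e_0)$; then $A$ is a cone with apex $\{e_0\}$, hence contractible, while $B = X(\taP{-2}\setminus\{e_0\})$.

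Step 1: identify $B$. A vertex relabelling combined with a global reversal of arrow orientations (which leaves multipath complexes unchanged) identifies $\taP{-2}\setminus\{e_0\}$ with $\widehat{\tL_{n-1}}$. By Corollary~\ref{cor:BLnhat}, $B\simeq\ast$ when $n$ is odd and $B\simeq S^{(n-2)/2}$ when $n$ is even. Step 2: identify $A\cap B = \mathrm{link}(e_0)$. A multipath $\sigma$ not containing $e_0$ is composable with $e_0$ if and only if (i) $\sigma$ avoids the competing out-edge $e_n'$ of vertex~$0$, and (ii) $\sigma$ contains no directed path from vertex~$1$ to vertex~$0$, which would close a cycle once $e_0$ is added. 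A vertex relabelling identifies $\taP{-2}\setminus\{e_0,e_n'\}$ with $\tW_{n-1}$, and the only simple path from (the image of) vertex~$1$ to (the image of) vertex~$0$ in $\tW_{n-1}$ is the full forward Hamiltonian path $\sigma_0 = \{e_1,\ldots,e_n\}$, a top-dimensional simplex of $X(\tW_{n-1})$. Hence $A\cap B \cong X(\tW_{n-1})\setminus\mathrm{int}(\sigma_0)$.

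Step 3: two Mayer--Vietoris computations. First, the decomposition $X(\tW_{n-1}) = (A\cap B)\cup \overline{\sigma_0}$ with intersection $\partial\sigma_0\simeq S^{n-2}$, combined with Proposition~\ref{prop:wn}, gives $H_*(A\cap B)$: an additional $\bZ$ in degree $n-2$ appears from the top-simplex removal, and the sphere class of $X(\tW_{n-1})$ in degree $\lfloor(n-2)/2\rfloor$ survives when $n\equiv 0,3\bmod 4$. Second, the Mayer--Vietoris sequence for $X = A\cup B$, together with $A\simeq\ast$ and the homotopy type of $B$ from Step~1, computes $H_*(X)$. The top-degree generator $\bZ\subset H_{n-2}(A\cap B)$ is transported by the connecting homomorphism to $H_{n-1}(X)$, producing the unconditional $\bZ$ in degree $n-1$. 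A case analysis on $n\bmod 4$, tracking whether the middle-degree sphere classes in $A\cap B$ and $B$ are identified by the inclusion-induced map (and hence cancel) or survive with a degree shift, then yields the residue-dependent middle homology asserted in the statement.

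Main obstacle. The decisive technical point is computing the inclusion-induced map $H_*(A\cap B)\to H_*(B)$ on sphere classes, which governs the cancellations and the degree shifts. For $n\equiv 0\bmod 4$ both spaces carry a class in degree $(n-2)/2$ and one must show that the inclusion $\tW_{n-1}\subset\widehat{\tL_{n-1}}$ induces an isomorphism on these classes, forcing their cancellation and leaving only $H_0$ and $H_{n-1}$; for $n\equiv 3\bmod 4$ only $A\cap B$ carries such a class, which then survives and is shifted in degree by the connecting homomorphism, matching the formula $\lfloor(n-1)/2\rfloor+1$ predicted by the conjecture. Making this identification precise requires explicit join-type spherical representatives, analogous to $X'_n$ and $X''_{2k}$ from Equations~\eqref{eq:almostbid}--\eqref{eq:almostbid2}, together with a careful analysis of how the extra bidirectional edge of $\widehat{\tL_{n-1}}$ compared to $\tW_{n-1}$ affects these representatives.
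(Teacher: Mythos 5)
First, a point of comparison: the paper does not prove Conjecture~\ref{conj:remove_edges2} at all --- it is stated as a conjecture, supported only by machine computation for $n\le 14$ --- so there is no proof of record to measure your argument against. Your strategy is the natural adaptation of the paper's proof of Theorem~\ref{thm:connectivity_homology} (there a three-piece cover $\mathrm{star}(e_0)$, $\mathrm{star}(e_0')$, $\mathrm{astar}(e_0)\cap\mathrm{astar}(e_0')$ and the Mayer--Vietoris spectral sequence; here a two-piece cover suffices because $e_0'$ is absent), and your identifications all check out: $B\cong X(\widehat{\tL_{n-1}})$ up to global edge reversal and relabelling, and $A\cap B$ is $X(\tW_{n-1})$ with the interior of the top cell $\sigma_0$ removed, since the underlying graph of $\taP{-2}\setminus\{e_0,e_n'\}$ is a tree and so the forward Hamiltonian path is the unique directed path from $1$ to $0$. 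Moreover, your ``main obstacle'' for $n\equiv 0\bmod 4$ is less of an obstacle than you suggest: there $n-1\equiv 3\bmod 4$, so Lemma~\ref{lem:hequivbln} already states that the inclusion $X(\tW_{n-1})\hookrightarrow X(\widehat{\tL_{n-1}})$ is a homotopy equivalence, and deleting the top cell does not change $H_{(n-2)/2}$; the required cancellation follows.

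The genuine problem is in the case $n\equiv 3\bmod 4$, where you assert that your answer matches the conjecture; it does not. Your middle class lives in $H_{\lfloor (n-2)/2\rfloor}(A\cap B)$, and the connecting homomorphism places it in degree $\lfloor\frac{n-2}{2}\rfloor+1=\frac{n-3}{2}+1=\frac{n-1}{2}=\lfloor\frac{n-1}{2}\rfloor$, not $\lfloor\frac{n-1}{2}\rfloor+1$. This is not a defect of your method: for $n=3$ one checks directly that $X(\taP{-2})$ is the boundary of the $3$-simplex spanned by the four forward edges together with an arc through $e_2'$ and $e_3'$ attached at the two vertices $e_0$ and $e_1$, hence is homotopy equivalent to $S^2\vee S^1$, with $H_1=\bZ$, $H_2=\bZ$ and Euler characteristic $6-9+4=1$; this agrees with your Mayer--Vietoris output and is incompatible with the displayed formula, which would force $H_1=0$ and $H_2=\bZ^2$ (Euler characteristic $3$). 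So either the conjecture as printed has an off-by-one in the $n\equiv 3\bmod 4$ line (it should presumably read $i=\lfloor\frac{n-1}{2}\rfloor$), or the computations behind it use a different indexing convention; in either case you must flag the discrepancy and state precisely what your argument establishes rather than claim agreement. The remaining work to turn the outline into a proof is routine: treat separately the small cases where degrees collide, and write out both Mayer--Vietoris sequences with the inclusion-induced maps identified via explicit join representatives, as is done in the proof of Theorem~\ref{thm:connectivity_homology}.
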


If we remove at least three consecutive edges from $\tP_n$, then the multipath complex is the same as the multipath complex of oriented cycle graph $C_n$, i.e. an $n-1$ sphere.

\begin{prop}
    Let $\taP{-*}$ be the subgraph of $\tP_n$ with all edges of one orientation (i.e. $C_n$ is a subgraph), and has at least $3$ consecutive edges removed of the reverse orientation. Then $X(\taP{-*})$ is homotopy equivalent to an $n-1$ sphere.
    \begin{proof}
        Without loss of generality suppose the edges $(v_3,v_2),(v_2,v_1)$ and $(v_1,v_0)$ are not in $\taP{-*}$. Define the poset fibration between the multipath posets $f:P(\taP{-*}
        )\rightarrow P(C_n)$ which maps every multipath that contains a backwards edge to the multipath $p=(v_1,v_2)$, and acts as the identity on all other multipaths. 

        The fibres $f^{-1}(P(C_n)_{\le q})$ are contractible for all $q\not=p$, as each has the maximal element $q$. The fibre~$f^{-1}(P(C_n)_{\le q})$ is also contractible as it has the maximal element 
         $$(v_1,v_2),(v_4,v_3),(v_5,v_4),\ldots,(v_n,v_0).$$
        Thus, by Quillen's fibre theorem \cite[Proposition 1.6]{quillen1978homotopy}, $P(\taP{-*})$ is homotopy equivalent to $P(C_n)$, which is a sphere \cite[Example 6.9]{zbMATH07680365}.
    \end{proof}
\end{prop}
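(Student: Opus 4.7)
The plan is to construct an explicit simplicial deformation retraction $\phi \colon X(\taP{-*}) \to X(C_n)$ and invoke the known homotopy equivalence $X(C_n) \simeq S^{n-1}$ from \cite[Example~6.9]{zbMATH07680365}. By the hypothesis and after relabeling we may assume the deleted backward edges are $(v_1, v_0)$, $(v_2, v_1)$, $(v_3, v_2)$; write $e_j = (v_j, v_{j+1})$ for the forward edges and observe that the forward edge $e_1 = (v_1, v_2)$ is ``opposite'' to the deleted window, in the sense that its reverse $(v_2,v_1)$ is among the deleted edges.

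I would define $\phi$ on vertices of $X(\taP{-*})$ by fixing each forward edge $e_j$ and sending every backward edge present in $\taP{-*}$ to the distinguished forward edge $e_1$. The first check is that $\phi$ extends to a simplicial map: for any multipath $\sigma$, the image $\phi(\sigma) \subseteq \{e_0, \ldots, e_n\}$ is a multipath of $C_n$ unless it coincides with the full cycle, which is ruled out because a multipath in $\taP{-*}$ contains at most $n$ of the $n+1$ forward edges. Since $\phi$ restricts to the identity on $X(C_n) \subseteq X(\taP{-*})$, it is a simplicial retraction.

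To upgrade $\phi$ to a deformation retraction I would show that $\iota \circ \phi$ is simplicially contiguous to $\mathrm{id}_{X(\taP{-*})}$, where $\iota \colon X(C_n) \hookrightarrow X(\taP{-*})$ is the inclusion; contiguity then implies simplicial homotopy. Since the two maps already agree on forward-only multipaths, contiguity reduces to the single claim that, for every multipath $\sigma$ of $\taP{-*}$ containing at least one backward edge, the set $\sigma \cup \{e_1\}$ is again a multipath of $\taP{-*}$. The degree conditions at $v_1$ and $v_2$ are automatic because the backward edges $(v_1,v_0)$ and $(v_2,v_1)$ are absent from $\taP{-*}$, so $e_1$ can safely be the only out-edge at $v_1$ and in-edge at $v_2$. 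The crucial point is absence of a directed cycle through $e_1$ in $\sigma \cup \{e_1\}$, which amounts to showing that $\sigma$ contains no simple directed path from $v_2$ to $v_1$.

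The main obstacle is precisely this last verification. My argument would be that in $\taP{-*}$ the only simple directed path from $v_2$ to $v_1$ is the full forward arc $v_2 \to v_3 \to \cdots \to v_n \to v_0 \to v_1$: a step-by-step examination of the outgoing edges at each intermediate vertex shows that any backward detour would either demand one of the three deleted backward edges or would return to an already-visited vertex, violating simplicity. Since $\sigma$ contains some backward edge $e'_j$ with $j \in \{3,\ldots,n\}$, the multipath condition excludes $e_{j-1}, e_j, e_{j+1}$ from $\sigma$, breaking this forward arc. This completes the contiguity check, so $\phi$ is a simplicial homotopy equivalence and hence $X(\taP{-*}) \simeq X(C_n) \simeq S^{n-1}$.
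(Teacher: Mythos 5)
Your proof is correct, and it takes a genuinely different route from the paper's. The paper works at the level of face posets: it defines a collapse map $f\colon P(\taP{-*})\to P(C_n)$ sending every multipath containing a backward edge to the one-edge multipath $(v_1,v_2)$ and fixing the all-forward multipaths, checks contractibility of the fibres $f^{-1}(P(C_n)_{\le q})$, and concludes with Quillen's fibre theorem \cite[Proposition 1.6]{quillen1978homotopy}, quoting \cite[Example 6.9]{zbMATH07680365} for $X(C_n)\simeq S^{n-1}$ exactly as you do. You instead define a vertex-wise simplicial retraction $\phi$ (forward edges $e_j=(v_j,v_{j+1})$ fixed, every surviving backward edge sent to $e_1=(v_1,v_2)$) and show that $\iota\circ\phi$ is contiguous to the identity, where $\iota\colon X(C_n)\hookrightarrow X(\taP{-*})$ is the inclusion. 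Both proofs hinge on the same combinatorial point -- the deleted window $(v_1,v_0),(v_2,v_1),(v_3,v_2)$ makes $e_1$ freely addable to any multipath containing a backward edge -- which in the paper underlies the contractibility of the fibre over $(v_1,v_2)$ and in your write-up is precisely the contiguity check. What your route buys is elementarity and transparency: $\phi$ is monotone on the nose, no fibre theorem is invoked, and contiguity yields the homotopy equivalence directly; the paper's route is shorter on the page but buries the same verification inside the fibre analysis. Two small repairs to your text: the stated reason that $\phi$ is simplicial (``a multipath contains at most $n$ of the $n+1$ forward edges'') is not by itself sufficient, since $\phi(\sigma)$ may acquire $e_1$ even when $e_1\notin\sigma$; what rules out $\phi(\sigma)$ being the full cycle is your later observation that a backward edge $(v_{j+1},v_j)$ with $j\ge 3$ forces $e_{j-1},e_j,e_{j+1}\notin\sigma$, so the fix is already in your write-up. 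Likewise, in the degree check at $v_2$ the relevant deleted edge is $(v_3,v_2)$ (the other in-edge at $v_2$), not $(v_2,v_1)$, whose absence instead enters through the no-cycle argument.
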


\section{Non-Shellability of bidirectional polygonal and linear graphs}

Whilst we have determined the homology of the multipath complexes of bidirectional polygonal digraphs, the question of their homotopy type remains open. We conjecture that the multipath complex of every bidirectional polygonal digraph is a wedge of spheres, and one natural approach to show this is using shellability, see \cite{wachs2006poset} for background on shellability. 
Unfortunately, this does not help in this case as we can show that $X(\tP_n)$ is not shellable.

\begin{prop}\label{prop:notshellBPn}
    The multipath complex $X(\tP_n)$ is not shellable, for all $n\ge2$.
    \begin{proof}
        Suppose there is a shelling order $F_1,F_2,\ldots,F_k$ of the maximal faces of $X(\tP_n)$, i.e. the maximal multipaths of $\tP_n$.
        Let $F_x$ be the first maximal face in the order with no backwards edges, as it is maximal such a path must consist of all forward edges except one. 
        The intersection of $F_x$ with $\cap_{i<x}F_i$ must have dimension at least 2 less than $F_x$, as any preceding maximal face must contain a backward edge, so be missing at least 3 forward edges.
        Yet a shelling order require this intersection has dimension 1 less. So to satisfy this condition the ordering must begin with a maximal path with no backwards edges.
        However, by symmetry the same argument implies that the shelling order must begin with a maximal path with no forward edges. 
        Clearly both cannot hold as the only such path is the empty path, which is not maximal in $\tP_n$ with $n\ge2$, so no shelling order exists.
    \end{proof}
\end{prop}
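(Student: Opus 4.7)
The plan is to argue by contradiction: assume a shelling order $F_1,\dots,F_k$ of the facets of $X(\tP_n)$ exists, and derive a contradiction by analyzing the first facet in the order that uses only forward edges. First I would observe that a facet with no backward edges, being a maximal multipath built from forward edges alone, must be a Hamiltonian forward path, so it contains exactly $n$ of the $n+1$ forward edges and has dimension $n-1$; by symmetry the analogous statement holds for facets with no forward edges.

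The key combinatorial step will be the following lemma: if a multipath $F$ contains the backward edge $e'_k=(k+1,k)$, then none of the three forward edges $e_{k-1}$, $e_k$, $e_{k+1}$ can belong to $F$. Indeed, $e_k$ would close a $2$-cycle with $e'_k$; the edges $e_{k-1}$ and $e'_k$ both target vertex $k$; and the edges $e_{k+1}$ and $e'_k$ both issue from vertex $k+1$. For $n\geq 2$ these three forward edges are distinct modulo $n+1$, so any facet containing at least one backward edge has at most $n-2$ forward edges.

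Now let $F_x$ be the first facet in the shelling order with no backward edges, so $\dim F_x = n-1$. For each $j < x$, the facet $F_j$ contains a backward edge, hence has at most $n-2$ forward edges; since $F_x\cap F_j$ consists only of forward edges common to both, $\dim(F_x\cap F_j)\leq n-3$. Therefore the subcomplex $\langle F_x\rangle\cap\langle F_1,\dots,F_{x-1}\rangle$ has dimension at most $n-3$ and cannot be pure of dimension $\dim F_x - 1 = n-2$, contradicting the shelling axiom unless $x = 1$. So $F_1$ has no backward edges; running the same argument with the roles of forward and backward edges swapped, $F_1$ also has no forward edges. This forces $F_1$ to be the empty multipath, which is not a facet of $X(\tP_n)$ for $n\geq 2$ since any single edge yields a strictly larger multipath, giving the desired contradiction.

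The main obstacle I expect is the combinatorial lemma on excluded forward edges: the argument itself is short, but one must carefully verify that $e_{k-1}$, $e_k$, $e_{k+1}$ are genuinely three distinct edges in the cyclic index set, which is precisely where the hypothesis $n\geq 2$ is used. Everything else is a direct application of the definition of non-pure shellability.
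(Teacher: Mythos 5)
Your proof is correct and follows essentially the same route as the paper's: take the first facet $F_x$ in the shelling order consisting only of forward edges, note that every earlier facet contains a backward edge and hence misses at least three forward edges, so the intersection with the union of earlier facets has dimension at most $\dim F_x-2$, forcing $x=1$; then the symmetric argument for backward edges gives the contradiction. Your explicit lemma that a backward edge $e'_k$ excludes $e_{k-1},e_k,e_{k+1}$ simply spells out the step the paper states in one line.
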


Thus we leave it as an open problem, as to whether the multipath complex of bidirectional polygonal digraphs is a wedge of spheres.
We have shown in Proposition~\ref{prop:connectivitylin} that the multipath complex of the bidirectional linear graph $X(\tL_n)$ is homotopy equivalent to a sphere. However, by a similar argument to Proposition~\ref{prop:notshellBPn} we can show that $X(\tL_n)$ is not shellable:

\begin{prop}
    The multipath complex $X(\tL_n)$ is not shellable, for all $n\ge2$.
    \begin{proof}
        Let $p$ be the directed path containing all forward edges, i.e. edges $(v_i,v_{i+1})$. 
        This path must appear first in the ordering, as every other maximal multipath must contain at least two fewer forward edges. 
        However, by symmetry the same applies to the path containing all backwards edges. So no shelling order exists.
    \end{proof}
\end{prop}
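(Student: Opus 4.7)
The plan is to adapt the argument used for Proposition~\ref{prop:notshellBPn}. Let $p_F$ be the facet consisting of all forward edges $(v_i, v_{i+1})$, and let $p_B$ be the facet consisting of all backward edges $(v_{i+1}, v_i)$. Both are simple directed paths of length $n$, hence facets of $X(\tL_n)$ of dimension $n-1$. The strategy is to show that every facet $F \ne p_F$ satisfies $\dim(F \cap p_F) \le n - 3$, and symmetrically that every facet $F' \ne p_B$ satisfies $\dim(F' \cap p_B) \le n - 3$. Once this is established, the shelling condition forces both $p_F$ and $p_B$ to occur as the first facet of any shelling order, which is impossible since they are distinct.

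The crux is the combinatorial claim that no facet $F \ne p_F$ can share $n-1$ forward edges with $p_F$. I would argue by contradiction: suppose $F$ contains all forward edges except $e_k = (v_k,v_{k+1})$. Since each forward edge $e_i \in F$ already saturates the out-degree at $v_i$ and the in-degree at $v_{i+1}$, the only vertices that can be the source of an additional backward edge in $F$ are $v_k$ and $v_n$, and the only vertices that can be its target are $v_0$ and $v_{k+1}$. A backward edge $(v_{i+1},v_i) \in F$ therefore requires $i \in \{k-1, n-1\} \cap \{0, k+1\}$, which leaves only $(k,i) = (1,0)$ or $(k,i) = (n-2, n-1)$. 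In both cases the candidate backward edge forms a $2$-cycle with the adjacent forward edge $e_0$ or $e_{n-1}$ that is already in $F$, violating the multipath condition. Hence $F$ consists of exactly the $n-1$ forward edges $\{e_0,\dots,e_{n-1}\} \setminus \{e_k\}$ and nothing else, making $F \subsetneq p_F$, contradicting the maximality of $F$.

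Granting the claim, the rest is formal: if $p_F$ were placed at position $i \ge 2$ in a shelling $F_1, \dots, F_m$, the pure-dimension requirement would demand some $j < i$ with $\dim(F_j \cap p_F) = n-2$, which the claim rules out. Hence $p_F = F_1$. Reversing all orientations yields an automorphism of $\tL_n$ swapping $p_F$ and $p_B$, so the symmetric form of the claim forces $p_B = F_1$, which is absurd. The only non-routine step is the degree/cycle analysis at the heart of the claim; once that is in hand, everything else is a direct transcription of the template used in Proposition~\ref{prop:notshellBPn}.
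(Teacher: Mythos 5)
Your proof is correct and takes essentially the same route as the paper's: force the all-forward path to appear first in any shelling order, then invoke the orientation-reversing symmetry to force the all-backward path to be first as well, a contradiction. The only difference is that you spell out, via the in/out-degree and $2$-cycle analysis, the key claim that no facet other than $p_F$ can contain $n-1$ forward edges, which the paper asserts without detailed justification.
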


\bibliographystyle{alpha}
\bibliography{biblio}
\end{document}